\DeclareMathOperator{\im}{\operatorname{im}}
\let\OLDthebibliography\thebibliography  
\renewcommand\thebibliography[1]{
	\OLDthebibliography{#1}
	\setlength{\parskip}{0pt}
	\setlength{\itemsep}{0pt plus 0.3ex}
}
\theoremstyle{definition}
\newtheorem{thm}{Theorem}[section]
\newtheorem{prop}[thm]{Proposition}
\newtheorem{defi}[thm]{Definition}
\newtheorem{rem}[thm]{Remark}
\newtheorem{ex}[thm]{Example}
\newtheorem{question}[thm]{Question}
\newcommand{\del}{\partial}
\newcommand{\delbar}{\overline{\partial}}
\newcommand{\deldelbar}{\partial \overline{\partial}}
\newcommand{\spn}{\mathrm{span}}
\newcommand{\cH}{\mathcal{H}}
\newcommand{\pr}{\operatorname{pr}}
\newcommand{\C}{\mathbb{C}}
\newcommand{\CC}{\mathbb{C}}
\newcommand{\su}{\mathfrak{su}}
\newcommand{\Z}{\mathbb{Z}}
\newcommand{\cM}{\mathcal{M}}
\title[Bigraded notions of formality]{Bigraded notions of formality and Aeppli--Bott--Chern--Massey products}
\begin{document}
	\author{A. Milivojevi\'c}
	\address{University of Waterloo, Faculty of Mathematics}
	\email{amilivojevic@uwaterloo.ca}
	\author{J. Stelzig}
	\address{ Mathematisches Institut der Ludwig-Maximilians-Universit\"at M\"unchen}
	\email{jonas.stelzig@math.lmu.de}
	\subjclass[2020]{32Q55, 55S30, 55T20, 55P62}
	\keywords{Complex manifolds, formality, Massey products}
	
	
	\begin{abstract}
		We introduce and study notions of bigraded formality for the algebra of forms on a complex manifold, along with their relation to higher Aeppli--Bott--Chern--Massey products which extend the case of triple products studied by Angella--Tomassini. We show that these Aeppli--Bott--Chern--Massey products on complex manifolds pull back non-trivially to the blow-up along a complex submanifold, as long as their degree is less than the real codimension of the submanifold. 
	\end{abstract}
	
	\maketitle
	
	\section{Introduction}
	From an early stage in the development of rational homotopy theory, there have been fruitful interactions with complex geometry: Deligne--Griffiths--Morgan--Sullivan famously proved that all compact K\"ahler manifolds are rationally formal \cite{DGMS75}, \cite{Su77}. Neisendorfer--Taylor adapted the notions of models and formality to the holomorphic category \cite{NT78} while giving some preference to the antiholomorphic differential $\delbar$ as necessitated by analogy to a singly-graded theory. Naturally, the question arises whether one can build a more symmetric theory which treats both differentials on equal footing.
	
	The first hints that such a theory may be possible and meaningful arose in the works of Angella--Tomassini \cite{AT15}, where an ad-hoc definition of new, symmetric Massey like triple products was given. These use genuinely new cohomological invariants of bicomplexes not defined for simple complexes, namely the input classes live in Bott-Chern cohomology and the output is an Aeppli-cohomology class. This left open two natural questions: 
	
	\begin{enumerate}
	    \item How can one generalize the definition of the triple products to an arbitrary number of inputs?
	    \item What is the homotopy--theoretic notion of formality that is obstructed by these products?
	\end{enumerate}
	In this article we answer the above questions, thereby giving new holomorphic invariants of complex manifolds with a homotopy--theoretic flavor.
	
	First, we identify the Aeppli--Bott--Chern--Massey (ABC--Massey) triple products as one member of a sequence of higher ABC--Massey products, employing a chain complex studied by Schweitzer and guided by a general framework for defining Massey--like products developed by Massey himself in the 1950's. In this framework, the ABC--Massey triple products studied in \cite{AT15} can be thought of as associated to a 1--simplex, while the quadruple, quintuple, etc. products are associated to higher-dimensional simplices. 
	
	We then identify two homotopy--theoretic notions of formality for bigraded bidifferential algebras, one stronger than the other, but equivalent under the $\del\delbar$-lemma (e.g. on compact K\"ahler manifolds). They are obstructed by the presence of ABC--Massey products. The relations between these, a metric--dependent formality condition akin to geometric formality, and ordinary (de Rham) formality are discussed through examples. Interestingly, it is not clear whether compact K\"ahler manifolds are formal in this new sense, and there are examples of non-formal $\del\delbar$-manifolds.
	
	Finally, we study the pullback behavior of the new higher operations. Babenko--Taimanov \cite{BT00} showed that Massey products on symplectic manifolds are preserved under symplectic blow-up along a submanifold, as long as their degree is less than twice the real codimension of the submanifold. We extend this to complex blow-ups and ABC--Massey products.  Adapting an argument of Taylor, we further observe that ABC--Massey triple products always pull back non-trivially under nonzero degree holomorphic maps of compact complex manifolds.

	\subsection*{Acknowledgements} This research was supported through the ``Research in Pairs'' program at the Mathematisches Forschungsinstitut Oberwolfach in December 2021. We thank the MFO for the excellent working conditions provided there. The first named author would likewise thank the Max Planck Institute for Mathematics. We are grateful to the Institut de Matemàtica, Universitat de Barcelona, and to the City University of New York Graduate Center for their generous hospitality; we thank Joana Cirici and Scott Wilson for many useful discussions and comments. We further thank Michael Albanese, John Morgan, Dennis Sullivan, and Leopold Zoller for stimulating conversations, and the anonymous referees for their helpful comments.
	
	\subsection*{Related work} 
The thesis of Nicoletta Tardini \cite{Ta17} contains a different notion of higher length Massey products that takes an odd number of input Bott--Chern classes and produces an Aeppli class.

	A more systematic treatment of the notions of (minimal) models and a model category structure for bigraded, bidifferential algebras that naturally gives rise to the notions of formality considered here is performed by the second named author in \cite{Ste23}.
	\section{Preliminaries}
	
	One of the basic objects we consider are double complexes, i.e. bigraded complex vector spaces with differentials, suggestively denoted $\del$ and $\delbar$, of bidegree $(1,0)$ and $(0,1)$ respectively, such that $(\del + \delbar)^2 = 0$.  We recall that double complexes admit direct sum decompositions into well-understood indecomposable subcomplexes (``squares and zigzags''), see \cite{KQ20}, \cite{Ste21}.

    Recall that the Bott--Chern and Aeppli cohomology are functors from the category of bicomplexes to the category of bigraded complex vector spaces given by

    $$H_{BC} = \frac{\ker \del \cap \ker \delbar}{\im \deldelbar}, \ \ H_A = \frac{\ker \deldelbar}{\im \del + \im \delbar}.$$

    \begin{defi}\label{def: bigraded quiso}
        A map of double complexes $\varphi: A\longrightarrow B$ is called a bigraded quasi--isomorphism if it induces an isomorphism in Bott--Chern and Aeppli cohomology.
    \end{defi}

    \begin{rem} A map induced by a holomorphic map of \emph{compact} complex manifolds that induces an isomorphism on Bott--Chern cohomology automatically induces an isomorphism on Aeppli cohomology (and vice versa) by Serre duality \cite{S07}, and is hence a bigraded quasi--isomorphism.
    \end{rem}
    
	\begin{defi}\label{def: coh. fun.}
		A cohomological functor is a linear functor from the category of double complexes to the category of vector spaces which sends direct sums of squares to the zero vector space.
	\end{defi}
	
	\begin{prop}\label{prop: E1-isos}
		Let $f:A\to B$ be an bigraded quasi--isomorphism of bounded double complexes.
		\begin{enumerate}
			\item The induced map $f^{\otimes n}:A^{\otimes n}\to B^{\otimes n}$ is an bigraded quasi--isomorphism.
			\item For any cohomological functor $H$, the induced map $H(f)$ is an isomorphism.
		\end{enumerate}
	\end{prop}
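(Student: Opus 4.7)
For part (1), the plan is to apply a Künneth theorem to each of the two differentials separately. Since we work over the field $\C$, for any two bounded double complexes $C, D$ there are natural isomorphisms $H_\del(C \otimes D) \cong H_\del(C) \otimes H_\del(D)$, and analogously for $\delbar$: both follow by viewing $(C,\del)$ and $(D,\del)$ as ordinary (bigraded) cochain complexes and invoking the classical Künneth formula on each column. Iterating $n-1$ times and using naturality of the Künneth iso in the maps, one identifies $H_\del(f^{\otimes n})$ with $H_\del(f)^{\otimes n}$, which is an isomorphism since $H_\del(f)$ is; the same holds for $\delbar$. Hence $f^{\otimes n}$ is an $E_1$-isomorphism.

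For part (2), the plan is to leverage the decomposition of bounded double complexes into squares and zigzags recalled in the preliminaries. Choose such decompositions $A \cong A_{sq} \oplus A_{zz}$ and $B \cong B_{sq} \oplus B_{zz}$. Since squares have vanishing row and column cohomology, the inclusion $\iota \colon A_{zz} \hookrightarrow A$ and projection $\pi \colon B \twoheadrightarrow B_{zz}$ are $E_1$-isomorphisms, and thus so is the composite $g := \pi \circ f \circ \iota \colon A_{zz} \to B_{zz}$. Since $H$ is linear and $H(A_{sq}) = H(B_{sq}) = 0$, the natural maps $H(A_{zz}) \to H(A)$ and $H(B) \to H(B_{zz})$ are isomorphisms. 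Under these, $H(f)$ is intertwined with $H(g)$, so it suffices to show that $H(g)$ is an isomorphism.

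The crux is then the claim that an $E_1$-isomorphism between pure-zigzag bounded double complexes is already an isomorphism of double complexes (making $H(g)$ an isomorphism a fortiori). To establish this, I would argue that each indecomposable zigzag type contributes to the bigraded row and column cohomology in a manner specific to its type, so these two cohomologies together detect the multiplicity of each zigzag summand in the Krull--Schmidt-type decomposition. The $E_1$-isomorphism condition then forces the zigzag multiplicities of $A_{zz}$ and $B_{zz}$ to agree, and, restricting $g$ to each zigzag-isotypic component, its action on the cohomology of that summand must be invertible. The main technical subtlety, I anticipate, will be handling the ``off-diagonal'' components of $g$ between distinct zigzag types and showing that they cannot obstruct $g$ being a global isomorphism; this should reduce to a tractable linear-algebraic verification once one notes that the endomorphism algebra of a single indecomposable zigzag is simply $\C$ and that nonzero maps between distinct zigzag types induce the zero map on the relevant bidegree of either cohomology.
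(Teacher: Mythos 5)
Part (1) is correct and is exactly the paper's (one-line) argument: over $\C$ the K\"unneth isomorphism for row and for column cohomology is natural in the map, so $H_{\del}(f^{\otimes n})\cong H_{\del}(f)^{\otimes n}$ and likewise for $\delbar$, and tensor powers of isomorphisms are isomorphisms.

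For part (2) the paper gives no self-contained proof---it cites \cite[Prop.~12]{Ste21}---and your attempt to supply one has a genuine gap precisely where that citation does the work. The reduction to the square-free parts $A_{zz},B_{zz}$ is a reasonable first step (with the caveat that $A_{sq}$ may be an \emph{infinite} direct sum of squares when the components $A^{p,q}$ are infinite-dimensional, so concluding $H(A_{sq})=0$ requires $H$ to commute with such sums, not merely to be additive). But both heuristics you offer for the crux are false. First, the bigraded dimensions of row and column cohomology do \emph{not} detect zigzag multiplicities: let $Z_5$ be the length-five zigzag supported on $(0,2),(0,1),(1,1),(1,0),(2,0)$ with differentials out of $(0,1)$ and $(1,0)$, let $D$ be the dot in bidegree $(1,1)$, and let $W_1,W_2$ be the length-three zigzags supported on $\{(0,1),(0,2),(1,1)\}$ and $\{(1,0),(1,1),(2,0)\}$ with differentials out of $(0,1)$, resp. $(1,0)$. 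Then $Z_5\oplus D$ and $W_1\oplus W_2$ have identical row and column cohomology in every bidegree but different zigzag content (they are distinguished, e.g., by $H_A^{1,1}$); that an $E_1$-isomorphism preserves multiplicities requires the finer spectral-sequence data, not just these dimensions. Second, and more importantly, your claim that nonzero maps between distinct zigzag types induce zero on the relevant cohomology is false: the projection from the length-two zigzag $\C^{0,0}\xrightarrow{\ \del\ }\C^{1,0}$ onto the dot in bidegree $(0,0)$ is a nonzero morphism of double complexes and induces an \emph{isomorphism} $H_{\delbar}^{0,0}(Z)\to H_{\delbar}^{0,0}(D)$. So the off-diagonal blocks of $H_{\del}(g)$ and $H_{\delbar}(g)$ are genuinely nonzero in general, and ruling out that they compensate for degenerate diagonal blocks is the real content of the assertion that an $E_1$-isomorphism of square-free complexes is an isomorphism. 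That assertion is true, but it is essentially the cited result of \cite{Ste21}; the ``tractable linear-algebraic verification'' you defer is the whole theorem.
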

	\begin{proof}
		The first statement is a combination of \cite[Theorem 1.21]{Ste23} and \cite[4.2.15]{Hov99}, c.f. \cite[Eq. 1.5.]{Ste23}. The second is \cite[Corollary 1.25]{Ste23}.
	\end{proof}

    \begin{rem}\label{rem: E1 vs ABC quiso}
    We recall \cite{Ste21} that a map of double complexes $\varphi: A\longrightarrow B$ is called an $E_1$--isomorphism if it induces an isomorphism in row and column cohomology. As a direct consequence of \Cref{prop: E1-isos}, a bigraded quasi--isomorphism is an $E_1$--isomorphism. For bounded double complexes, the converse is also true, \cite[Theorem 1.21]{Ste23}.	
	\end{rem}

	\begin{rem}\label{realstructure}
		In general, the notions of bigraded quasi--isomorphism and $E_1$--isomorphism impose stronger conditions than that of inducing an isomorphism in column (``Dolbeault'') cohomology only (see the notion of \emph{$E_0$-quasi-isomorphism} in \cite{CSLW20}). However, if both double complexes are equipped with a real structure, i.e. an antilinear involution $\sigma$ such that $\sigma A^{p,q}=A^{q,p}$ and $\sigma\del\sigma=\delbar$, and we consider maps compatible with this structure ($\sigma\varphi=\varphi\sigma$), then the condition of being an $E_1$--isomorphism is the same as inducing an isomorphism in column cohomology only. For bounded complexes, it is therefore also the same as that of bigraded quasi--isomorphism. In particular, this applies to the case of $A=A_X$, $B=A_Y$ being the double complexes of forms on complex manifolds $X,Y$, and $\varphi=f^*$ induced by a holomorphic map $Y\xrightarrow{f} X$, or to the inclusion of forms $A^G_X\subseteq A_X$ invariant under a group acting by biholomorphisms.
	\end{rem}

	The forms on a complex manifold have the additional structure of a graded--commutative product, where the graded--commutativity takes into consideration only the total degree, together with the differentials $\del$ and $\delbar$ being (graded, again with respect to the total grading) derivations. We refer to such a structure as a \emph{(graded--)commutative bigraded bidifferential algebra}, or \textbf{cbba} for short. A map of cbba's whose underlying map of double complexes is an bigraded quasi--isomorphism will be called a \textbf{weak equivalence} (this model--categorical terminology is justified in \cite{Ste23}).
	
	There are augmented versions of all the above objects; both will be relevant in the following sections.
	
	\section{Aeppli--Bott--Chern higher products}\label{ABCsection}
	
	Recall the Aeppli--Bott--Chern--Massey (ABC--Massey) triple product, as defined in \cite[Definition 2.1]{AT15}. For Bott--Chern cohomology classes $a, b, c \in H_{BC}(X)$ such that $ab = bc = 0$, take representatives $\alpha, \beta, \gamma$ for $a,b,c$ respectively, and choose forms $x,y$ such that $\deldelbar x = \alpha \beta$ and $\deldelbar y = \beta \gamma$. Then the triple product is the coset in $H_A(X)/\left( aH_A(X) + cH_A(X) \right)$ corresponding to the Aeppli cohomology class $[\alpha y - x \gamma] \in H_A(X)$. Note the different sign convention than in \cite{AT15}; the results of loc. cit. carry through verbatim for this different sign convention. 
	
	We adapt Massey's spectral sequence construction of triple and higher order Massey products to the setting of complex manifolds. This construction is basepoint-dependent, and hence will give invariants of pointed complex manifolds. However, the construction of the Massey product as a differential in a certain spectral sequence reviewed below,  informs one how to define the usual, basepoint-independent, ``ad hoc'' Massey products. In either case, we recover the ordinary and Dolbeault--Massey products \cite{CT15}, along with ABC--Massey triple products (see e.g. \cite{TT14}, \cite{AT15}). This allows us to define higher ABC--Massey products in the spectral sequence setup. We give an explicit definition and formula for ad hoc quadruple ABC--Massey products, which can in principle be extended to quintuple and higher products. Generally the spectral sequence Massey products have a larger indeterminacy but also a larger domain of definition than the ad hoc ones.
		
	\subsection{Eilenberg--Moore spectral sequence}
	We fix an augmented abstract simplicial complex $K$, i.e. a collection of subsets (including the empty set) of some fixed set such that for every $\sigma\in K$ and $\tau\subseteq \sigma$, we also have $\tau\in K$. We will later only consider the case that $K$ is the standard simplex, i.e. $K=\Delta^n=$ all subsets of $\{0,...,n\}$, but the construction works in this greater generality and gives, in principle, additional invariants.
	
	We further fix a linear functor $S$ from double complexes to simple complexes such that its composition with taking cohomology is a cohomological functor. On a first reading, the reader may want to assume that this associates to a double complex $A$ its total complex. This choice of $A$ will recover the Massey products as presented in \cite[\textsection \textsection 3--4]{M58}.
	
	Now let $A$ be an augmented cbba and write $A^+=\ker(A\to\C)$ for its augmentation ideal. This is a cbba without unit. The principal example we have in mind is $A=A_X$ for a complex manifold and the augmentation given by the restriction of forms to some basepoint $x\in X$.
	We may now form a new triple complex $C_{\ast}^{\ast,\ast}(A,K)$ as follows: For every fixed $p\in\Z$, define 
	\begin{equation}\label{simplicial bicomplex}
		C_p(A,K):=\bigoplus_{\sigma\in K, |\sigma|=p}\underbrace{A^+\otimes...\otimes A^+}_{p+2\text{ times}}
	\end{equation}
	(where we consider the empty set to have cardinality $-1$). Each $C_p(A,K)$ inherits the structure of a double complex from $A$, by using the usual induced differentials and gradings on tensor products and direct sums of double complexes. This gives the upper grading on $C_{\ast}^{\ast,\ast}(A,K)$. The differential $\delta$ in the direction of the lower grading is given as follows: Denote the summand belonging to a given $p$-simplex $\sigma$ by $C_p(A,K)_{\sigma}$. If $\sigma_i$ is the face obtained by omitting the $i$-th vertex, then on elementary tensors in $C_p(A,K)_\sigma$, one sets 
	\[
	\delta (a_0\otimes...\otimes a_p)=\sum_{\sigma_i\in K}(-1)^i a_0\otimes...\otimes a_i\cdot a_{i+1}\otimes...\otimes a_p
	\]
	and extends linearly. This defines a map of double complexes $C_p(A,K)\to C_{p-1}(A,K)$ and satisfies $\delta^2=0$. 
	
	Now we may apply $S$ to each $C_p^{\ast,\ast}(A,K)$ to obtain a simple complex $C_p^{\ast}(A,K,S)$ and, consequently, a double complex $C_{\ast}^{\ast}(A,K,S)$ (with commuting differentials instead of anticommuting ones). Note that the ``horizontal'' differential $\delta$ is of degree $-1$ and the vertical differential is of degree $1$. 
	
	\begin{defi}
		The \textbf{Eilenberg--Moore spectral sequence} $\{EM(A,K,S)^{\ast,\ast}_r, d_r\}_{r\geq 1}$ is the spectral sequence associated with the filtration of $C_\ast^{\ast}(A,K,S)$ by horizontal degree. Its differentials are of degree $|d_r|=(-r,-r+1)$ and the spaces on the first page are given by 
		\[
		EM(A,K,S)^{p,q}_1=H^q(C_p^\ast(X,K,S))=\bigoplus_{\sigma\in K,|\sigma|=p}H^q(S(A^+\otimes \cdots \otimes A^+)).
		\]
		with differential $d_1$ induced by $\delta$.
	\end{defi}
	
	\begin{rem}\label{functoriality}
	By construction, this spectral sequence is functorial for maps of augmented cbba's, and by (the augmented version of) \Cref{prop: E1-isos}, a weak equivalence of augmented cbba's gives rise to isomorphic spectral sequences. 
	\end{rem}
	
	\begin{ex}(\cite{M58})
		Let $S$ be the total-complex functor and $K=\Delta^n$ the standard simplex. Then for $n=0$, The first page looks as follows:
		\[
		H(A^+_{tot})\overset{d_1}{\longleftarrow}H(A^+_{tot})\otimes H(A^+_{tot}),
		\]
		where we suppress the vertical grading in the notation and the differential is given by multiplication. For $n=1$, we obtain as first page
		\[
		H(A^+_{tot})\overset{d_1}{\longleftarrow}H(A^+_{tot})^{\otimes 2}\oplus H(A^+_{tot})^{\otimes 2}\overset{d_1}{\longleftarrow}H(A^+_{tot})^{\otimes 3},
		\]
		where the first map is given by $(a\otimes b, c\otimes d)\mapsto ab+cd$ and the second one by $(a\otimes b\otimes c)\mapsto (ab\otimes c, -a\otimes bc)$. In particular, for elementary tensors $[\alpha]\otimes [\beta]\otimes [\gamma]$ with $\alpha\beta=dx$, $\beta\gamma=dy$ (and hence $d_1([\alpha]\otimes [\beta]\otimes [\gamma])=0$, the second differential $d_2([\alpha]\otimes [\beta]\otimes [\gamma])=[x\gamma - (-1)^{|\alpha|} \alpha y]$ is given by the formula for the ordinary triple Massey product.
		In general, for arbitrary $n$, the longest possible differential will (on elementary tensors) be given by the formula for $n+2$--fold Massey products.
	\end{ex}
	Motivated by this, we define:
	\begin{defi}
		Let $n\geq 2$. An \textbf{$n$--fold Massey product in the spectral sequence sense} (with respect to $S$) in the augmented algebra $A$ is an element in the image of the map
		\[
		d_{n-1}: EM(A,\Delta^{n-2}, S)_{n-1}^{n-2,\ast}\to EM(A,\Delta^{n-2},S)_{n-1}^{-1,\ast-n+2}.
		\]
	\end{defi}
	
	\begin{rem}\label{rem: functoriality spectral sequence MP}
	    Note that by definition, such a Massey product is a subset of $H(S(A^+))$ (namely, a coset for the space generated by the images of all lower page differentials). By Remark \ref{functoriality}, Massey products can be pulled back and are invariants of an augmented cbba up to augmented weak equivalence.
	\end{rem}

    \begin{rem}
        If there is a weak equivalence $\varphi:B\to A$ from a connected cbba $B$ (i.e. $B^{<0}=0$, $B^0=\C$), then the Massey products do not depend on the augmentation. In fact, $B$ admits the canonical augmentation $B\to B/B^{> 0}\cong \C$ and $\varphi$ will automatically be augmented for any augmentation on $A$. This is the case for the forms on holomorphically simply connected manifolds and on (most, and conjecturally all) nilmanifolds \cite[Theorem 2.34]{Ste23}, \cite{R11}. We do not know if there is an example where they actually depend on the augmentation.
    \end{rem}
    
	\begin{ex}
		Forgetting the $\del$-differential in $A$, i.e. taking $S$ to be the column complex functor, we obtain a spectral sequence version of the Dolbeault--Massey products \cite{CT15} (which are the natural Massey products in Neisendorfer--Taylor's theory \cite{NT78}).
	\end{ex}

As a more substantial example, we will now generalize ABC--Massey triple products to arbitrary length. Our generalization will be of a different nature than the one considered by Tardini \cite[\textsection 4.4]{Ta17}, where an odd number of Bott--Chern classes is taken in to produce an Aeppli class. 
	
To this end, we recall \cite[4.b]{S07} the Schweitzer complex $S_{p,q}(A)$ associated to a double complex $A$ and a fixed bidegree $(p,q)$, which presents $H_A^{p-1,q-1}$ and $H_{BC}^{p,q}$ as the cohomology of a singly-graded complex; the cohomologies of this complex define cohomological functors \cite{Ste22}. The complex is given by 
$$
{\small \begin{tikzcd}
                   &                                                  &                                        &                           &                                             & {\ \ \ \ \ \ \ \ } \\
                   &                                                  &                                        &                           & {A^{p,q+1}\oplus A^{p+1,q}} \arrow[ru, "d"] &    \\
                   &                                                  &                                        & {\  \ \ A^{p,q} \  } \arrow[ru, "d"] &                                             &    \\
                   &                                                  & {A^{p-1,q-1}} \arrow[ru, "\deldelbar"] &                           &                                             &    \\
                   & {A^{p-2,q-1} \oplus A^{p-1,q-2}} \arrow[ru, "d"] &                                        &                           &                                             &    \\
{} \arrow[ru, "d"] &                                                  &                                        &                           &                                             &   
\end{tikzcd} }$$
	where $d$ denotes the map induced by restricting or projecting the de Rham differential in the obvious way. 
	
	\begin{figure}[h!] \centering \scalebox{0.3}{
			\includegraphics[width=\textwidth]{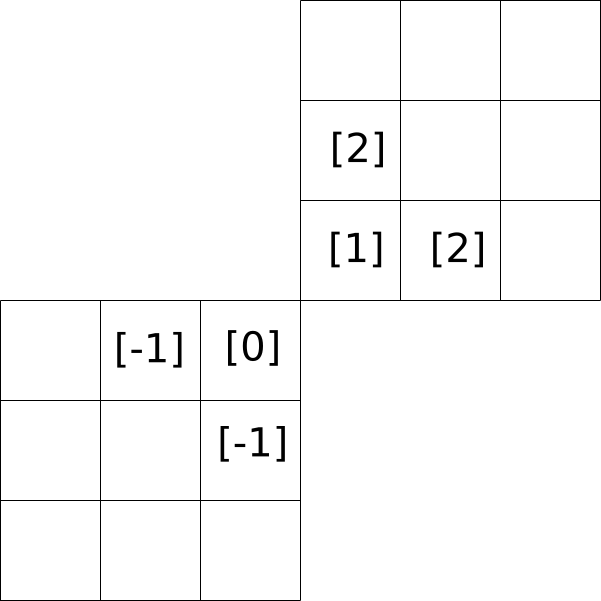}}
			\caption{The indices in the Schweitzer complex.}
	\end{figure}
	
	Note that the cohomology of this complex at the $A^{p,q}$ entry is precisely $H_{BC}^{p,q}(A)$, and at $A^{p-1,q-1}$ it is $H_A^{p-1,q-1}(A)$. Indexing the entries so that $A^{p-1,q-1}$ is at index 0 and $A^{p,q}$ is at index 1, let us denote by $H_{S_{p,q}}^i(A)$ the cohomology at index $i$; in this convention, $H_A^{p-1,q-1}(A) = H_{S_{p,q}}^0(A)$, $H_{BC}^{p,q}(A) = H_{S_{p,q}}^1(A)$ and
		\[
	H_{S_{p,q}}^{-1}(A)=\frac{\ker \left(\pr\circ~d: A^{p-2,q-1}\oplus A^{p-1,q-2}\longrightarrow A^{p-1,q-1}\right)}{\im \left(\pr\circ~ d:A^{p-3,q-1}\oplus A^{p-2,q-2}\oplus A^{p-1,q-3}\longrightarrow A^{p-2,q-1}\oplus A^{p-1,q-2}\right)}.
	\]

	Let us explain how triple ABC--Massey products arise in this setup. The part of the first page of the Eilenberg--Moore spectral sequence $EM(A,\Delta^1,S_{p,q})$ relevant for us looks as follows: 
	$$
	\begin{tikzcd}
		     {H_{BC}^{p,q}(A^+)}& {H_{BC}^{p,q}((A^+)^{\otimes 2})\oplus H_{BC}^{p,q}((A^+)^{\otimes 2})} \arrow[l]       & {H_{BC}^{p,q}((A^+)^{\otimes 3})}\arrow[l]    \\
		 {H_{A}^{p-1,q-1}(A^+)}& {H_{A}^{p-1,q-1}((A^+)^{\otimes 2})\oplus H_{A}^{p-1,q-1}((A^+)^{\otimes 2})} \arrow[l] & {H_{A}^{p-1,q-1}((A^+)^{\otimes 3})}\arrow[l]
	\end{tikzcd}
	$$
	
	Now pick three Bott--Chern classes $a,b,c$ whose degrees sum up to $(p,q)$. There is a natural map $\left( H_{BC}(A^+)\right)^{\otimes 3}\to H_{BC}((A^+)^{\otimes 3})$, so their tensor product defines an element in $H_{BC}^{p,q}((A^+)^{\otimes 3})$. If $ab = bc = 0$, this element will be $d_1$-closed. The second differential, i.e. the triple ABC--Massey product is represented by a zigzag:

$$	
\begin{tikzcd}
{S_{p,q}^1(A^+)} & {S_{p,q}^1((A^+)^{\otimes 2})\oplus S_{p,q}^1((A^+)^{\otimes 2})}                                   & {S_{p,q}^1((A^+)^{\otimes 3})} \arrow[l] \\
{S_{p,q}^0(A^+)} & {S_{p,q}^0((A^+)^{\otimes 2})\oplus S_{p,q}^0((A^+)^{\otimes 2})} \arrow[u, "\deldelbar"] \arrow[l] & {S_{p,q}^0((A^+)^{\otimes 3})}          
\end{tikzcd}$$
	
	If $a=[\alpha], b=[\beta], c=[\gamma]$ and $ab=[\deldelbar x], \beta\gamma=\deldelbar y$, such a zigzag is given by
	
	$$
\begin{tikzcd}
                   & {(\alpha \beta \otimes \gamma, -\alpha \otimes \beta \gamma)} & \alpha \otimes \beta \otimes \gamma \arrow[l] \\
x\gamma - \alpha y & {(x \otimes \gamma, -\alpha \otimes y)} \arrow[u] \arrow[l]   &                                              
\end{tikzcd}
	$$ 
	So $d_2(a\otimes b\otimes c)$ is represented by the usual triple ABC--Massey product.
	
Now we describe the quadruple ABC--Massey products. They will be represented by a zigzag: 
	$$
\begin{tikzcd}
{S_{p,q}^1(A^+)} & {(S_{p,q}^1((A^+)^{\otimes 2})^{\oplus 3}}                     & {(S_{p,q}^1((A^+)^{\otimes 3})^{\oplus 3}}                     & {S_{p,q}^1((A^+)^{\otimes 4})} \arrow[l] \\
{S_{p,q}^0(A^+)} & {(S_{p,q}^0((A^+)^{\otimes 2})^{\oplus 3}}                     & {(S_{p,q}^0((A^+)^{\otimes 3})^{\oplus 3}} \arrow[u] \arrow[l] & {S_{p,q}^0((A^+)^{\otimes 4})}           \\
{S_{p,q}^{-1}(A^+)} & {(S_{p,q}^{-1}((A^+)^{\otimes 2})^{\oplus 3}} \arrow[u] \arrow[l] & {(S_{p,q}^{-1}((A^+)^{\otimes 3})^{\oplus 3}}                     & {S_{p,q}^{-1}((A^+)^{\otimes 4})}          
\end{tikzcd}
	$$
	
	If we start with classes $a,b,c,d \in H_{BC}$ of pure bidegree, with representatives $\alpha, \beta, \gamma, \delta$ such that $|\alpha\beta\gamma\delta|=(p,q)$ and assume there are pure bidegree elements $x,y,z,\eta,\eta',\xi,\xi'$ such that 
	\begin{equation}\label{defining system}
	\deldelbar x=\alpha\beta,~\deldelbar y=\beta\gamma,~\deldelbar z=\gamma\delta\text{ and }x \gamma - \alpha y = \del \eta + \delbar \eta',~y \delta - \beta z = \del \xi + \delbar \xi'
	\end{equation}
	such a zigzag may be represented as
	$$
		{\tiny
\begin{tikzcd}
                                                                                                                                       &                                                                                                                                                                                                    & \begin{pmatrix} \alpha \beta \otimes \gamma \otimes \delta \\ -\alpha \otimes \beta \gamma \otimes \delta \\ \alpha \otimes \beta \otimes \gamma \delta\end{pmatrix} & \alpha \otimes \beta \otimes \gamma \otimes \delta \arrow[l] \\
                                                                                                                                       & \begin{pmatrix} \alpha \otimes (y\delta - \beta z) \\ \alpha \beta \otimes z - x \otimes \gamma \delta \\ (x \gamma - \alpha y) \otimes \delta \end{pmatrix}                                            & \begin{pmatrix} x \otimes \gamma \otimes \delta \\ - \alpha \otimes y \otimes \delta \\ \alpha \otimes \beta \otimes z\end{pmatrix} \arrow[l] \arrow[u]              &                                                              \\
\begin{tabular}{c} $(-1)^{|\alpha|} \alpha(\xi + \xi')$ \\ $-(\del x)z + (-1)^{|x|+1} x \delbar z$ \\ $+ (\eta + \eta')\delta$ \end{tabular} & \begin{pmatrix} (-1)^{|\alpha|} \alpha \otimes (\xi + \xi') \\ - \left( \del x \otimes z + (-1)^{|x|} x \otimes \delbar z \right) \\ (\eta + \eta')\otimes \delta \end{pmatrix} \arrow[l] \arrow[u] &                                                                                                                                                                      &                                                             
\end{tikzcd}
		}
	$$

	The quadruple ABC--Massey product of $a,b,c,d$ is then represented by the class 
	\[[(-1)^{|\alpha|} \alpha(\xi + \xi') -(\del x)z + (-1)^{|x|+1} x \delbar z + (\eta + \eta')\delta]
	\]in (a quotient of) $H_{S_{p,q}}^{-1}(A^+)$. One can continue this construction to obtain formulas for the ``higher'' ABC--Massey products.

	\begin{rem}\label{bottleneck} Note that we could have taken our initial data in a different cohomology group of the Schweitzer complex of a tensor power of $A^+$, beside Bott--Chern cohomology. Hence we in fact obtain a doubly--indexed family of Massey--like products in the spectral sequence sense; one index for the ``length'' of the product (that we may think of as the number of inputs), and one index for the cohomology group of the Schweitzer complex our inital data lives in. 
	
	Three somewhat distinct behaviors emerge among the doubly--indexed family of products in the spectral sequence sense: The upper ``half'' of the Schweitzer complex (i.e. with index $\geq 1$) has a map of complexes to the total de Rham complex, while the lower half (i.e. with index $\leq 0$) receives a map from the total de Rham complex. Consequently, if we start in high degree in the upper half of the Schweitzer complex and form sufficiently short Massey products, these products map to the ordinary de Rham Massey products (but they live in finer groups). Similarly, we may map de Rham Massey products to ABC--Massey products starting in the lower part of the Schweitzer complex. 
	
	On the other hand, those products that cross the \textbf{$\deldelbar$-bottleneck} in the Schweitzer complex, i.e. with input in the upper half and output in the lower half, are genuinely new phenomena that do not seem to be related in any straightforward way to ordinary Massey products. 
	
	We also remark that one can map Bott--Chern classes to the cohomology at any index in the appropriate Schweitzer complex, and hence, if one prefers to start with a pure tensor of Bott-Chern classes, this is possible at any stage. \end{rem}

	\subsection{Ad hoc Massey products}
	The ABC--Massey products introduced above are associated with an augmented cbba. This is in contrast with the more common ad hoc definitions of (triple ABC--) Massey products, which do not need an augmentation, as for instance in \cite[Section 2]{M58}, \cite{K66} (resp. \cite{AT15}, \cite{Ta17}). The ad hoc version always starts with a pure tensor of classes as input and outputs a subset of cohomology, which, apart from the triple product case, is generally not the coset of a linear subspace. The product is then said to vanish if zero is contained in this subset. 
	
	There is, in principle, a straightforward way of obtaining such an unaugmented ad hoc version from the explicit description of the differentials in the spectral sequence version. Since the amount of necessary notation becomes unwieldy, let us only sketch this for triple and quadruple products, taking our cues from our examples discussed in the previous section and \cite{K66}:
	
	\begin{defi}\label{adhocquadruple}
	Let $A$ be a cbba and $a,b,c,d\in H_{BC}(A)$ classes of pure bidegree with representatives $\alpha,\beta,\gamma,\delta$ and write $(p_\alpha,q_\alpha)=|\alpha|$, etc.
	\begin{enumerate}
	    \item A \textbf{defining system} for the ad hoc ABC--Massey quadruple product $\langle a,b,c,d\rangle$ is a collection of  elements $x,y,z,\tilde{\eta},\tilde{\xi}$ such that
	    \begin{align*}
	    x\in S_{|\alpha\beta|}^{0}(A),~ y\in S_{|\beta\gamma|}^{0}(A),~ z\in S_{|\gamma\delta|}^0(A),\\
	    \tilde{\eta}=\eta+\eta'\in S_{|\alpha\beta\gamma|}^{-1}(A)=A^{|\alpha\beta\gamma|-(2,1)}\oplus A^{|\alpha\beta\gamma|-(1,2)},\\
	    \tilde{\xi}=\xi+\xi'\in S_{|\beta \gamma\delta|}^{-1}(A)=A^{|\beta \gamma\delta|-(2,1)}\oplus A^{|\beta \gamma\delta|-(1,2)},
	    \end{align*}
	    satisfying \Cref{defining system} above.
	    \item The \textbf{ad hoc quadruple ABC--Massey product} $\langle a,b,c,d\rangle$ is the subset of $H_{S_{|\alpha\beta\gamma\delta|}}^{-1}(A)$ given by the collection of all classes
	    \[
	    [(-1)^{|\alpha|} \alpha\tilde{\xi} -(\del x)z + (-1)^{|x|+1} x \delbar z + \tilde{\eta}\delta]
	    \]
	    determined by a defining system.
	\end{enumerate}  
	\end{defi}
	
Another choice for the classes in the ad hoc quadruple product would be $$[(-1)^{|\alpha|} \alpha\tilde{\xi} + (\delbar x)z + (-1)^{|x|} x \del z + \tilde{\eta}\delta].$$ Note that the difference between the representatives of this and the previous choice is $d(xz)$, hence the two representatives give the same class in $H_{S_{|\alpha\beta\gamma\delta|}}^{-1}(A)$.
\begin{rem}
    As seen above, if one makes the analogous definitions in order to define an ad hoc triple ABC--Massey product, one recovers the ABC--Massey product $\langle a,b,c\rangle$ of \cite{AT15}. One can also pursue defining ad hoc quintuple and higher products, but we do not do so here.
\end{rem}

The functoriality and invariance properties in this ad hoc setup need to be proven with more care:

\begin{prop}\label{prop: MPs E1-iso stable}
	Let $\varphi:A\to B$ be a map of cbba's. 
		\begin{enumerate}
			\item (Functoriality) For $a,b,c\in H_{BC}(A)$, one has $\varphi \langle a,b,c\rangle\subseteq \langle{\varphi(a),\varphi(b),\varphi(c)}\rangle$. 
			\item (Invariance under weak equivalences) If $\varphi$ is a weak equivalence, $a,b,c\in H_{BC}(A)$, $a',b',c'\in H_{BC}(B)$, there are equalities of sets $$\varphi \langle a,b,c\rangle=\langle \varphi(a),\varphi(b),\varphi(c)\rangle\subseteq H_A(B)$$
			and 
			$$\varphi^{-1}\langle a',b',c'\rangle=\langle\varphi^{-1}(a'),\varphi^{-1}(b'),\varphi^{-1}(c')\rangle\subseteq H_A(A).$$
		\end{enumerate} 
		In particular, admitting a non-trivial Massey product is invariant under weak equivalences.
	\end{prop}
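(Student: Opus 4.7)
The plan is first to establish (1) by a direct calculation at the level of defining systems, and then to bootstrap (2) from (1), exploiting the fact that the ad hoc triple ABC--Massey product is a single coset of an indeterminacy subspace. For (1), let $(\alpha,\beta,\gamma,x,y)$ be a defining system for $\langle a,b,c\rangle$, so $\alpha,\beta,\gamma$ are pure--bidegree representatives of $a,b,c$ with $\deldelbar x=\alpha\beta$ and $\deldelbar y=\beta\gamma$. Since $\varphi$ is a morphism of cbba's, it commutes with $\del$ and $\delbar$, is multiplicative, and sends cocycles to cocycles representing the correct classes. Hence $(\varphi\alpha,\varphi\beta,\varphi\gamma,\varphi x,\varphi y)$ is itself a defining system for $\langle \varphi(a),\varphi(b),\varphi(c)\rangle$, and the Aeppli class $[\alpha y - x\gamma]$ it produces maps under $\varphi$ to $[\varphi(\alpha)\varphi(y)-\varphi(x)\varphi(\gamma)]$. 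This yields the desired containment.

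For the first equality in (2), recall from \cite{AT15} that for $a,b,c$ with $ab=bc=0$ the triple product $\langle a,b,c\rangle$ is a single coset of the indeterminacy subspace $a\cdot H_A(A)+H_A(A)\cdot c\subseteq H_A(A)$. By \Cref{prop: E1-isos} together with \Cref{rem: E1 vs ABC quiso}, a weak equivalence $\varphi$ induces isomorphisms on both $H_{BC}$ and $H_A$, and since $\varphi$ is multiplicative these isomorphisms respect the $H_{BC}$--module structure on $H_A$. In particular $\varphi$ takes the indeterminacy subspace on the $A$--side isomorphically onto $\varphi(a)\cdot H_A(B)+H_A(B)\cdot\varphi(c)$. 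Consequently $\varphi\langle a,b,c\rangle$ is itself a coset of this latter subspace, and by (1) it is contained in the coset $\langle \varphi(a),\varphi(b),\varphi(c)\rangle$ of the same subspace; two cosets of a common subgroup with nonempty intersection must coincide, giving the claimed equality.

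The second equality is then obtained by applying $\varphi$ to both sides: the left--hand side becomes $\langle a',b',c'\rangle$ by bijectivity of $\varphi$ on $H_A$, while the right--hand side becomes $\varphi\langle\varphi^{-1}(a'),\varphi^{-1}(b'),\varphi^{-1}(c')\rangle = \langle a',b',c'\rangle$ by the first equality; injectivity of $\varphi$ on $H_A$ then yields the claim. I do not anticipate a significant obstacle; the main conceptual input is that the triple product is always a single coset (unlike the quadruple and higher ABC--Massey products introduced earlier, for which a corresponding statement would require more care), so the containment in (1) automatically upgrades to equality by a coset--counting argument. The only verification needed is that the $H_{BC}$--module structure on $H_A$ is preserved by weak equivalences, which is immediate from the multiplicativity of $\varphi$ on representatives together with \Cref{prop: E1-isos}.
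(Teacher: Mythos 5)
Your argument is correct, but for the substantive step---the reverse inclusion in the first equality of (2)---you take a genuinely different route from the paper. Part (1) is identical in both: the image of a defining system under a cbba map is a defining system. For (2), the paper argues at the level of elements: given primitives $x,y\in B$ with $\deldelbar x=\varphi(\alpha\beta)$, $\deldelbar y=\varphi(\beta\gamma)$, it uses surjectivity of $\varphi$ on $H_{BC}$ and $H_A$ to correct chosen primitives in $A$ so that their images realize the same Aeppli class, thereby exhibiting every value of $\langle\varphi(a),\varphi(b),\varphi(c)\rangle$ as the image of a value of $\langle a,b,c\rangle$. You instead exploit the global structure of the triple product as a single coset of the indeterminacy subspace $aH_A+cH_A$ (as in \cite{AT15} and recalled in \Cref{ABCsection}), note that a weak equivalence carries this subspace onto $\varphi(a)H_A(B)+\varphi(c)H_A(B)$ by multiplicativity together with the induced isomorphisms on $H_{BC}$ and $H_A$, and finish by comparing two cosets of the same subspace that meet. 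This is shorter and cleaner for triple products, but---as you rightly observe---it does not extend to quadruple and higher ad hoc ABC--Massey products, which are generally not cosets of linear subspaces; the paper's element-level construction is exactly what \Cref{remarkafterprop} leans on to carry the proposition over to quadruple products, Tardini's products, and ordinary Massey products. One precision worth adding to your write-up: the coset-counting step uses both directions of the ``single coset'' fact, namely that $\langle a,b,c\rangle$ \emph{fills out} an entire coset of its indeterminacy (so that its image under $\varphi$ is a full coset of the target indeterminacy) and that $\langle\varphi(a),\varphi(b),\varphi(c)\rangle$ is \emph{contained in} a single coset; both are standard for triple products and implicit in the definition quoted from \cite{AT15}, but they are distinct assertions and should be invoked separately.
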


	\begin{proof}
	Given any defining system for the $\langle a,b,c\rangle$, its image under $\varphi$ is a defining system for $\langle{\varphi(a),\varphi(b),\varphi(c)}\rangle$, hence the functoriality assertion.  For the first equality of part $(2)$, we have to show the other inclusion. Let $a=[\alpha],b=[\beta],c=[\gamma]$ and $x,y\in B$ with $\deldelbar x=\varphi(\alpha\beta)$ and $\deldelbar y=\varphi(\beta\gamma)$, such  that $[\varphi(\alpha)y-x\varphi(\gamma)]\in \langle\varphi(a),\varphi(b),\varphi(c)\rangle$. Then we want to find $\varphi(x'),\varphi(y')\in \im \varphi$ such that $\deldelbar x'=\alpha\beta$ and $\deldelbar y'=\beta\gamma$ such that $[\varphi(\alpha)y-x\varphi(\gamma)]=[\varphi(\alpha y')-\varphi(x'\gamma)]\in \varphi\langle a,b,c\rangle$. First, since $H_{BC}(A)\cong H_{BC}(B)$, we may pick \textit{some} primitives $\deldelbar\varphi(\tilde{x})=\varphi(\alpha\beta)$ and $\deldelbar(\tilde{y})=\varphi(\beta\gamma)$. Now, $\varphi(\tilde{x})-x$ and $\varphi(\tilde{y})-y$ are $\deldelbar$-closed and therefore define Aeppli-classes. Because $\varphi$ is a weak equivalence, we may choose elements $\tilde{x}',\tilde{y}'\in A$ such that $[x-\varphi(\tilde{x})]=[\varphi(\tilde{x}')]$ and $ [y-\varphi(\tilde{y})]=[\varphi(\tilde{y}')]$. Then, setting $x'= \tilde{x}'+\tilde{x}$ and $y'= \tilde{y}'+\tilde{y}$ does the job, as
		\[
		[\varphi(\alpha)y-x\varphi(\gamma)]=[\varphi(\alpha)(y - \varphi(\tilde{y}) + \varphi(\tilde{y})) - (x - \varphi(\tilde{x}) + \varphi(\tilde{x}))\varphi(\gamma)]=[\varphi(\alpha)\varphi(y')-\varphi(x')\varphi(\gamma)].
		\]
		The second assertion follows from the first since we may write $a'=\varphi(a),b'=\varphi(b),c'=\varphi(c)$ for some $a,b,c\in H(A)$.
	\end{proof}
	
	\begin{rem}\label{remarkafterprop}
		The same kind of arguments (with more cumbersome notation) show that Proposition \ref{prop: MPs E1-iso stable} remains valid for quadruple ABC--Massey products, for Tardini's \cite{Ta17} Massey products, and also for usual Massey products and quasi-isomorphisms.
	\end{rem}

	\begin{rem}
	    We would like to emphasize that neither invariant, the Massey products in the spectral sequence sense or the ad hoc sense, is finer than the other. The spectral sequence products have a large indeterminacy (e.g. \emph{any} product of positive-degree cohomology classes will represent a trivial triple product in the spectral sequence sense), but also a larger domain of definition, as the input data is not restricted to pure tensors. One can find a discussion on this (for ordinary Massey products) in \cite{P17}.
	\end{rem}
	
	\begin{ex}\label{quadruple} We exhibit an example of a non-vanishing ad hoc quadruple ABC--Massey product, with four Bott--Chern classes as input. Consider a complex nilmanifold with model $$\left( \Lambda(x, \bar{x}, y, \bar{y}, z, \bar{z}, w, \bar{w}), dz = xy, dw = xz \right).$$ That is, the above is the algebra of invariant forms on a complex parallelizable nilmanifold together with its bigrading; the inclusion of this into all forms is a weak equivalence by \cite{Sa76}; see also the work of Console--Fino, Rollenske and others, summarized in \cite{R11}. 
		
		Consider the Bott--Chern classes represented (uniquely) by $x, x\bar{y}, y\bar{x}$. Note that $x^2 = 0$, $x(x\bar{y}) = 0$, and that $(x\bar{y})(\bar{x}y) = \deldelbar(z\bar{z})$. The triple ABC--Massey product $\langle x, x\bar{y}, \bar{x}y\rangle$ vanishes, as we have $xz\bar{z} = \del(w\bar{z})$. The triple product $\langle x, x, x\bar{y} \rangle$ also vanishes, trivially.
		
		The quadruple ABC--Massey product $\langle x, x, x\bar{y}, \bar{x}y \rangle$ is well-defined, and is represented by the $H_{S_{4,2}}^{-1}$ class $[xw\bar{z}]$. To verify that this quadruple product is indeed non-vanishing, we show that no other choice of primitive elements can yield the trivial class. First of all, notice that 0 is the only possible choice of $\deldelbar$-primitive for $x^2$ and for $x(x\bar{y})$. A primitive for $(x\bar{y})(\bar{x}y)$ can be anything in the affine space $$z\bar{z} + \mathrm{span}\left(x\bar{x}, x\bar{y}, x\bar{z}, x\bar{w}, y\bar{x}, y\bar{y}, y\bar{z}, y\bar{w}, z\bar{x}, z\bar{y}, w\bar{x}, w\bar{y}\right).$$ Therefore the possible representatives for $\langle x, x\bar{y}, \bar{x}y \rangle$ lie in the affine space $$xz\bar{z} + \mathrm{span}\left( x y \bar{x}, xy\bar{y}, xy\bar{z}, xy\bar{w}, xz\bar{x}, xz\bar{y}, xw\bar{x}, xw\bar{y} \right).$$ Since $\delbar$ vanishes on $(2,0)$ forms, any representative of the quadruple ABC--Massey product is thus in \[x\left( w\bar{z} + \mathrm{span}\left( x\bar{x}, x\bar{y}, x\bar{z}, x\bar{w}, y\bar{x}, y\bar{y}, y\bar{z}, y\bar{w}, z\bar{x}, z\bar{y}, w\bar{x}, w\bar{y}, z\bar{z}, z\bar{w} \right) \right).\] No differential from $S^{-2}_{4,2}$ contains a term of $xw\bar{z}$, and hence we are done. 

This manifold is a holomorphic analogue of the filiform nilmanifold, which is the simplest example of a nilmanifold with a non-vanishing quadruple Massey product. Indeed, our complex nilmanifold is a holomorphic torus bundle over the Iwasawa bundle, while the filiform nilmanifold is a circle bundle over the Heisenberg nilmanifold. (Here we refer to ``the'' filiform and Heisenberg nilmanifold, though there are various lattices in the appropriate simply connected nilpotent Lie group one could choose, resulting in different homotopy types; however, the rational homotopy theoretic minimal model is unique.)	
\end{ex} 
	
\begin{rem}
	Our definitions of ABC--Massey products can be adapted to the almost complex setting by using the double complexes $(A_X)_s$ and $(A_X)_q$, associated with an almost complex manifold $X$ in \cite{CPS22}, and the ensuing notions of Schweitzer complexes.
\end{rem}

	\section{Bigraded notions of formality}
	
	In usual rational homotopy theory, a commutative differential graded algebra (cdga) is called \textit{formal} if it is connected by a chain of quasi-isomorphisms to a cdga with trivial differential (whose underlying algebra is consequently isomorphic to the cohomology of the original cdga). A singly-graded complex additively splits into dots and lines (i.e. zigzags of length two), where the latter do not contribute to cohomology. Hence we may interpret formality of a cdga as the existence of a chain of quasi-isomorphisms to a cdga with either only dots, or (equivalently) with no shapes that do not contribute to cohomology. This motivates the following two notions of formality in the bigraded setting, where we wish to consider the differentials $\del$ and $\delbar$ on equal footing:
	
	\begin{defi}
		A cbba $A$ is called:
		\begin{enumerate}
			\item \textbf{weakly formal}, if it is connected by a chain of weak equivalences to a cbba $H$ which satisfies $\del_H\delbar_H=0$.
			\item \textbf{strongly formal}, if it is connected by a chain of weak equivalences to a cbba $H$ which satisfies $\del_H=\delbar_H=0$.
		\end{enumerate}
		We use the same terminology if $A$ is augmented and the weak equivalences preserve augmentations. 		
	\end{defi}

 \begin{rem}
     The augmented versions of formality imply the unaugmented versions. Conversely, one can show that if $A$ admits a connected cofibrant model in the sense of \cite{Ste23}, the unaugmented versions imply the augmented ones.
 \end{rem}
	A strongly formal cbba is one that is connected by a chain of weak equivalences to a cbba whose underlying complex consists only of dots, while a weakly formal one is connected to a cbba which additively has no squares.
	
	Note that strong formality implies weak formality and the $\deldelbar$-lemma (the latter being an (additive) bigraded quasi--isomorphism invariant of double complexes). In the definition of strong formality, one may thus take $H$ to be $H_{BC}(A)$. Conversely if $A$ satisfies the $\deldelbar$-lemma, weak and strong formality are equivalent. 
	
	\begin{rem} The notions of \emph{Dolbeault formality} and \emph{strict (Dolbeault) formality} considered in \cite[p.187]{NT78} are implied by our notion of strong formality, and a priori unrelated to weak formality. In fact, we show in  \Cref{prop: strform} that for compact complex manifolds, strict formality is equivalent to the $\deldelbar$-lemma. Through the examples discussed below we see that the $\deldelbar$-lemma does not imply strong or weak formality, nor does weak formality imply the $\deldelbar$-lemma. \end{rem}
 
	
	As an immediate consequence of \Cref{rem: functoriality spectral sequence MP}, \Cref{prop: MPs E1-iso stable} and \Cref{remarkafterprop}, we have the following, where we mean Massey products in the spectral sequence sense for augmented formality and in the ad hoc sense for unaugmented formality:

	\begin{prop}\label{weaklyformalABC--Massey} On a strongly formal manifold all ABC--Massey products vanish. On a weakly formal manifold, all ABC--Massey products that cross the $\deldelbar$-bottleneck (see \Cref{bottleneck}) vanish.\end{prop}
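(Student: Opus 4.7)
The plan is to invoke the cited invariance statements to reduce both assertions to the corresponding statement about the model $H$ appearing in the definition of formality, and then to observe that in $H$ the defining zigzag for a Massey product may be taken to be identically zero. Specifically, Remark~\ref{rem: functoriality spectral sequence MP} handles the spectral sequence Massey products in the augmented setting, while Proposition~\ref{prop: MPs E1-iso stable} together with Remark~\ref{remarkafterprop} handles the ad hoc Massey products in the unaugmented setting.

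For strong formality, $H$ satisfies $\del_H = \delbar_H = 0$, so every differential in the Schweitzer complex of $H$ (and of its tensor powers) is zero. Whenever a subproduct of the input classes must vanish in order for a Massey product to be defined, the corresponding product of representatives in $H$ is literally zero, since $\im \deldelbar_H = 0$. I would then choose every primitive---whether in an ad hoc defining system or in the Eilenberg--Moore zigzag---to be $0$; the resulting representative is then $0$, so the Massey product contains (or equals) the zero class.

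For weak formality, $H$ satisfies $\deldelbar_H = 0$ while the other $d$-maps of the Schweitzer complex need not vanish. The key observation is that the $\deldelbar$-map from index $0$ to index $1$ of the Schweitzer complex of $H$ is zero. For a Massey product crossing the bottleneck, the zigzag must at some stage invert this $\deldelbar$-map; the well-definedness of the product forces the element to be lifted to be literally zero in $H$, so I would choose the corresponding primitive to be $0$. The subsequent horizontal multiplications in the zigzag then vanish summand by summand, since each summand contains either a factor equal to one of these zero primitives or a product of original representatives that is literally zero in $H$ by the previous step. All remaining vertical primitives, including those lifting under the $d$-maps in the lower half of the Schweitzer complex, may then also be taken to be zero.

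The step requiring the most care is the last one---verifying that the zero choice at the bottleneck propagates cleanly through all remaining stages of a zigzag of arbitrary length. This is a bookkeeping exercise directly parallel to the explicit computation of the quadruple product preceding \Cref{adhocquadruple}, where one sees that once $x,y,z$ are taken to be zero the subsequent element at the next horizontal level automatically vanishes, allowing $\tilde\eta,\tilde\xi$ to be taken to be zero in turn. No new ideas should be needed to handle higher products.
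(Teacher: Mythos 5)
Your overall strategy coincides with the paper's: the proposition is stated there as an immediate consequence of \Cref{rem: functoriality spectral sequence MP}, \Cref{prop: MPs E1-iso stable} and \Cref{remarkafterprop}, i.e.\ one transports the products along the chain of weak equivalences to the model $H$ and observes that they vanish there. Your treatment of strong formality is correct for both flavours of product: if $\del_H=\delbar_H=0$ then the induced differentials on all tensor powers $(H^+)^{\otimes m}$ also vanish (Leibniz), so every vertical differential in every Schweitzer complex is zero, every element that must be lifted is literally zero, and all lifts may be taken to be $0$. Your treatment of the ad hoc products under weak formality is also correct: there the defining system lives in $H$ itself (e.g.\ $\deldelbar x=\alpha\beta$ with $x,\alpha\beta\in H$), so $\im\deldelbar_H=0$ forces $\alpha\beta=\beta\gamma=\gamma\delta=0$ on the nose, and the all-zero defining system exhibits $0$ as an element of the product; your observation that the subsequent stages then vanish term by term, as in the explicit quadruple computation, is the right bookkeeping.

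There is, however, a gap in your argument for the spectral-sequence products under weak formality. You assert that the bottleneck lift inverts ``the $\deldelbar$-map of the Schweitzer complex of $H$'' and that well-definedness forces the element being lifted to be zero. But in the Eilenberg--Moore zigzag the lift at horizontal position $p$ takes place in the Schweitzer complex of the tensor power $(H^+)^{\otimes (p+2)}$ with $p\geq 0$, and $\deldelbar_H=0$ does \emph{not} imply $\deldelbar_{(H^+)^{\otimes m}}=0$ for $m\geq 2$: one has $\deldelbar(u\otimes v)=\pm\bigl(\del u\otimes\delbar v-\delbar u\otimes\del v\bigr)$, and these cross terms survive. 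Consequently the $d_1$-closedness of a class in $EM_1^{n-2,\ast}$ only requires, say, $\alpha\beta\otimes\gamma\in\im\deldelbar_{(H^+)^{\otimes 2}}$, which is strictly weaker than $\alpha\beta=0$ in $H$, so the element to be lifted need not vanish and the lift cannot in general be chosen to be $0$. Your argument does go through for inputs that are pure tensors of Bott--Chern classes with vanishing consecutive products in $H_{BC}(H^+)$ (since then the special lifts of the form $x\otimes\gamma$ with $\deldelbar x=\alpha\beta$ exist and $x$ may be taken to be $0$), but the image of $d_{n-1}$ is larger than that, and this case needs a separate argument (or a restriction of the statement) that neither you nor, admittedly, the paper supplies.
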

	
	In ordinary rational homotopy theory, and in the Dolbeault homotopy theory of \cite{NT78}, formality is equivalent to augmented formality for cohomologically connected spaces (e.g. compact complex manifolds in the latter), essentially due to the fact that the construction of a cofibrant model does not involve adding generators in degree 0 (or bidegree $(0,0)$ for the latter). From now on, we will consider only the unaugmented notions of formality, unless explicitly stated otherwise.

	Taking metrics into account, recall that a compact Riemannian manifold is called geometrically formal if products of harmonic forms are again harmonic. Considering the Bott--Chern and Aeppli harmonic forms (see \cite[Section 2]{S07}), which we denote by $\cH_{BC}$ and $\cH_{A}$, respectively, one can ask for the additional condition that the collection of harmonic forms is closed under the differential:
	\begin{defi}\label{ABCgeometricallyformal}
		A connected compact complex manifold is called \textbf{ABC-geometrically formal} if it admits a Hermitian metric such that following two equivalent conditions are satisfied:
		\begin{enumerate}
			\item\label{geometric formality i.t.o. ABC} The space $\cH_A+\cH_{BC}$ is closed under multiplication and $\del,\delbar$.
			\item\label{geometric formality i.t.o minimal subbba} There exists a bigraded, bidifferential subalgebra $\cH\subseteq A_X$ which is closed under $\ast$ and satisfies $\deldelbar=0$, such that the inclusion is a weak equivalence.
		\end{enumerate}
	\end{defi}
	Let us prove that these two conditions are indeed equivalent: If condition (\ref{geometric formality i.t.o. ABC}) holds, set $\cH:=\cH_A+\cH_{BC}$. Since
	\[
	\omega\in\cH_{BC}\quad \Leftrightarrow\quad \begin{cases}\del\omega=0\\\delbar\omega=0\\\deldelbar\ast\omega=0\end{cases}\quad\Leftrightarrow\quad \ast\omega\in\cH_{A}, 
	\]
	this space is closed under $\ast$ and satisfies $\deldelbar =  0$ by definition. It is a bigraded subalgebra by assumption. Since $H_{BC}(\cH)=H_{BC}(X)$ and $H_{A}(\cH)=H_A(X)$, the inclusion is a weak equivalence by \Cref{rem: E1 vs ABC quiso}. 
	
	Conversely, if condition (\ref{geometric formality i.t.o minimal subbba}) holds, $\cH$ has to be finite dimensional, since in any decomposition into indecomposable complexes, there can be no squares (since $\deldelbar=0$) and every zigzag has to occur with the same multiplicity as in $A_X$ (where the multiplicities are known to be finite). In particular, $\cH$ is a closed subspace and we have an orthogonal projection $A_X\to \cH$. Because $\cH$ is closed under $\ast$ and is a bigraded subspace, this projection is a map of (bi)complexes: Indeed, pick any orthonormal basis $\{h_i\}$ for $\cH$. Then the projection of any element in $\omega\in A_X$ is written as $\pr_{\cH}(\omega)=\sum\langle \omega, h_i\rangle h_i$. Thus:
	
	\[
	(d\circ \pr_{\cH})(\omega)=d\left(\sum\langle \omega, h_i\rangle h_i\right)=\sum_{i,j}\langle \omega, h_i\rangle \langle d h_i, h_j\rangle h_j
	\]
	\[
	(\pr_{\cH}\circ~d) (\omega)=\sum_i\langle \omega,d^*h_i\rangle h_i=\sum_{i,j}\langle \omega,h_j\rangle \langle d^* h_i,h_j\rangle h_i
	\]
	and both terms coincide. In particular, this implies that the Bott--Chern and Aeppli Laplacians have block-diagonal form with respect to the splitting $
	A_X=\cH\oplus \cH^\perp$
	and consequently $\cH_{BC}=\cH_{BC}\cap \cH \oplus \cH_{BC}\cap \cH^\perp$. Since $H_{BC}(\cH)=H_{BC}(X)$, this implies $\cH_{BC}\subseteq \cH$ and similarly $\cH_A\subseteq \cH$. On the other hand, if $\omega\in \cH$, necessarily $\deldelbar\omega=0$, i.e. it gives rise to an Aeppli class. The harmonic component $\omega^A$ lies in $\cH$, so $\omega'=\omega-\omega^A\in \cH\cap(\im\del+\im\delbar)=\del\cH+\delbar\cH$, where for the last equality we use that an element in $\cH$ which gives the zero class in $H_A(X)$ has already a primitive in $\cH$ as $H_{A}(\cH)\cong H_A(X)$. But then, because $\deldelbar=0$ in $\cH$, we have $\del\omega'=\delbar\omega'=0$ and $\deldelbar\ast\omega'=0$, i.e. $\omega'\in \cH\cap\cH_{BC}$. This completes the proof.

	\begin{rem}  Note that the multiplicative structure was not used in proving the equivalence of the two conditions in \Cref{ABCgeometricallyformal}. So we also have the following statement: the space $\cH_A+\cH_{BC}$ is closed under $\del$ and $\delbar$ if and only if there exists a bigraded subcomplex $\cH\subseteq A_X$ which is closed under $\ast$ and satisfies $\deldelbar=0$, such that the inclusion is an (additive) bigraded quasi--isomorphism. Note also that this equivalence implies that the subspace in the second condition has to coincide with $\cH_{BC}+\cH_A$, and in particular must be real.
	\end{rem}
	
	\begin{rem}
	    Note that the inclusion $\cH\subseteq A_X$ is automatically compatible with the augmentation given by the choice of any basepoint. In particular, ABC-geometric formality implies weak formality in the augmented sense.
	\end{rem}
	
	\begin{rem} (Relation to other notions of geometric formality) ABC-geometric formality implies geometric Bott--Chern formality in the sense of Angella--Tomassini \cite{AT15}, i.e. the Bott--Chern harmonics form an algebra. Indeed, the product of two Bott--Chern harmonic forms will of course be closed under $\del$ and $\delbar$, as well as under $(\deldelbar)^*$ by the assumption of ABC-geometric formality.
		
		If we further assume our manifold satisfies the $\deldelbar$-lemma, then ABC-geometric formality implies Dolbeault geometric formality \cite{TT14} and ordinary geometric formality. Indeed, e.g. for the latter, since the Bott--Chern harmonics and Aeppli harmonics coincide, they are closed under $\del, \delbar, \del^*, \delbar^*$ and hence de Rham harmonic, and vice versa. \end{rem}

	\begin{ex}\label{KT} We show on the example of the Kodaira--Thurston surface $KT$ that ABC-geometric formality, and hence also weak formality, does not imply the $\deldelbar$-lemma nor de Rham formality. 

 Presenting $KT$ as the group of real matrices of the form $$\begin{pmatrix} 1 & u_1 & u_3 & 0 & 0 \\ 0 & 1 & u_2 & 0 & 0 \\ 0 & 0 & 1 & 0 & 0 \\ 0 & 0 & 0 & 1 & u_4 \\ 0 & 0 & 0 & 0 & 1 \end{pmatrix}$$ modulo integer matrices, a direct computation shows that the Lie algebra of left-invariant vector fields is of the form $\mathrm{span}(V_1,V_2,V_3,V_4)$ with the only non-trivial bracket among basis vectors being $[V_1,V_2] = -V_3$. Dualizing we have the rational homotopy theoretic model $(\Lambda(v_1,v_2,v_3,v_4), dv_3 = v_1 v_2)$ of $KT$ given by the cdga of left-invariant one-forms on the above matrix group. A left-invariant complex structure is determined by $J(v_1) = v_2, J(v_3) = v_4$. Complexifying and setting $x = v_1 - iv_2, y = 2i(v_3 - iv_4)$ we have a bigraded model for this complex manifold given by $$\left( \Lambda(x, \bar{x}, y, \bar{y}), dy = x\bar{x} \right),$$ where $x$ and $y$ are in bidegree $(1,0)$. The inclusion of this cbba into the forms on $KT$ is a weak equivalence.
		
		\begin{figure}[h!] \centering \scalebox{0.2}{
				\includegraphics[width=\textwidth]{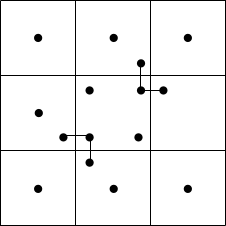}}
			\caption{The double complex of the Kodaira--Thurston surface.}
		\end{figure}
		
		With respect to the obvious (diagonal)  metric, we see that this algebra satisfies the second condition in \Cref{ABCgeometricallyformal}. Being a non-trivial nilmanifold, $KT$ is not formal (nor is it Dolbeault formal). A concrete non-vanishing (Dolbeault--) Massey triple product is $\langle x, \bar{x}, \bar{x}\rangle$, represented by $y \bar{x}$. 
	\end{ex}
	
	\begin{ex}\label{hopf} We consider a Hopf surface $(\CC^2 - \{0\})/ (\alpha \sim \lambda \alpha)$, where $\lambda$ is a non-zero complex number not on the unit circle. This complex manifold is weakly formal, de Rham formal (as the underlying smooth manifold is $S^1 \times S^3$), and does not satisfy the $\deldelbar$-lemma (as the first Betti number is odd). A model, i.e. a cbba with a weak equivalence to the cbba of forms, for the Hopf surface is given by \cite[Lemma 3.10]{Ste23} \[\left( \Lambda(x, \bar{x}, y, z, \del z, \delbar z), dx = -d\bar{x} = y, \deldelbar z = iy^2 \right),\] where $x$ is in bidegree $(1,0)$ and $y,z$ are in $(1,1)$. This model maps to
		\[
		\left(\Lambda(x,\bar{x}, y)/(y^2), dx=-d\bar{x}=y \right)
		\]
		by sending $z,\del z,\delbar z$ to $0$. This map is a weak equivalence and the latter cbba satisfies $\del\delbar\equiv 0$, so the Hopf surface is weakly formal. 
	\end{ex}

 We now compare with the notion of strict formality of \cite{NT78}. Recall from \cite{NT78} that a cbba $A$ is called $\del$-degenerate, if there is a map of cbba's $\varphi:M\to A$ such that $H_{\delbar}(\varphi)$ is an isomorphism and the $(1,0)$-differential $\del$ on $M$ is trivial. A cbba is called strictly formal if it is $\del$-degenerate and formal as a differential bigraded algebra (DBA). While on the surface this seems to be a multiplicative notion, we show below that on compact complex manifolds this depends only on the additive structure of the complex of forms:
 
 \begin{prop}\label{prop: strform}
Let $X$ be a compact complex manifold. The following assertions are equivalent:
\begin{enumerate}
    \item\label{lab: ddbar} $X$ satisfies the $\deldelbar$-lemma.
    \item\label{lab: dclosed} Every Dolbeault cohomology class can be represented a $d$-closed form.
    \item\label{lab: strform} $X$ is strictly formal.
    \item\label{lab: del-deg} $X$ is $\del$-degenerate.
\end{enumerate}
\end{prop}
\begin{proof}
The implication (\ref{lab: ddbar})$\Rightarrow$(\ref{lab: strform}) is shown in \cite{NT78} with an argument from \cite{DGMS75}. The implication (\ref{lab: strform})$\Rightarrow$(\ref{lab: del-deg}) follows from the definition. For the implication (\ref{lab: del-deg})$\Rightarrow$(\ref{lab: dclosed}), consider a map $\varphi: M\to A_X$ as in the definition of $\del$-degeneracy. Note that we can write every Dolbeault cohomology class $\mathfrak{c}\in H_{\delbar}(X)$ as $\mathfrak{c}=[\varphi(m)]$ for some $\delbar$-closed element in $M$. But $\del$ on $M$ is trivial and so $m$, hence $\varphi(m)$ is $d$-closed.

Finally, we show (\ref{lab: dclosed})$\Rightarrow$(\ref{lab: ddbar}). Choose a decomposition of $A_X$ into indecomposable double complexes (`squares and zigzags'). Recall \cite{DGMS75}, \cite{Ste21} that the $\deldelbar$-lemma is equivalent to the statement that only squares and dots appear in such a decomposition, i.e. that zigzags of length $\geq 2$ have multiplicity $0$. By assumption (\ref{lab: dclosed}), all Dolbeault cohomology classes in all zigzags with nonzero multiplicity have to have a closed representative. This rules out zigzags of length $\geq 2$ for which the top left generator is $\delbar$-closed, i.e. anything of the following form:
\[
\begin{tikzcd}
\bullet \arrow[r] & \bullet                              \\
                  & \quad \quad \ddots \arrow[u, dashed]
\end{tikzcd}
\]
Now, on a compact complex $n$-fold, the multiplicity of any zigzag $Z$ is the same as that of its reflection along the diagonal $\sigma Z$ and as that of its reflection along the $n$-th antidiagonal $\tau Z$, see \cite{Ste21}. But for any zigzag which is not a dot, there is a zigzag in the $\langle \sigma,\tau\rangle$-orbit which is ruled out by the above argument.
\end{proof}

\begin{rem}
    Note that one could assume even less than $\del$-degeneracy: Namely, it is sufficient to require that there is a map of double complexes (not necessarily multiplicative) $\varphi:M\to A_X$ where $M$ has $\del\equiv 0$ and $H_{\delbar}(\varphi)$ is surjective.
\end{rem}

\begin{rem}
 In \cite{NT78} it is claimed that the Hopf surface is strictly formal. This contradicts the above result since it does not satisfy the $\deldelbar$-lemma. Or, just using (\ref{lab: dclosed}), one may see that a generator for $H_{\delbar}^{0,1}$ in degree $(0,1)$ cannot have a closed representative, since otherwise its conjugate would be a nonzero holomorphic $1$-form, of which there are none.
\end{rem}

 \begin{ex} In \cite{ST23} the authors consider six--dimensional nilmanifolds with a left-invariant complex structure admitting a left-invariant SKT metric. As is immediate from the proof of \cite[Theorem 7.2]{ST23}, these are weakly formal (and the non-abelian ones are not strongly formal). \end{ex}
	
		\begin{ex} Consider the Calabi--Eckmann complex structure on $S^3 \times S^3$, with a global basis $\{\phi^1, \phi^2, \phi^3 \}$ of $(1,0)$--forms satisfying \begin{align*} d \phi^1 &= i \phi^1 \phi^3 + i \phi^1 \overline{\phi^3}, \\ d \phi^2 &= \phi^2 \phi^3 - \phi^2 \overline{\phi^3}, \\ d \phi^3 &= -i \phi^1 \overline{\phi^1} + \phi^2 \overline{\phi^2}, \end{align*} see \cite[\textsection 3]{TT17}. Note that the inclusion of the algebra generated by these elements and their conjugates into all forms is an injection on $H_{\delbar}$ since it is a map of algebras satisfying Serre duality, with respect to the invariant volume form $\phi^1\phi^2\phi^3\overline{\phi^1}\overline{\phi^2}\overline{\phi^3}$. From knowledge of the Hodge numbers of $S^3\times S^3$ \cite{B78}, the map is also a surjection on $H_{\delbar}$, and hence, since it preserves real structures (see \Cref{realstructure}), it is a weak equivalence. With respect to the diagonal  metric, the subalgebra generated by $\phi^1 \overline{\phi^1}, \phi^2 \overline{\phi^2}, \phi^3 \overline{\phi^3}$ and their $\del$ and $\delbar$-derivatives satisfies the second condition of \Cref{ABCgeometricallyformal}, and so in particular this manifold is weakly formal. 
		
		As shown in \cite[\textsection 3]{TT17}, there is a small deformation of this complex manifold that carries a non-trivial ABC--Massey triple product. In particular, by \Cref{weaklyformalABC--Massey}, weak formality is not stable under (small) deformation. 
		
	\end{ex}
	
	\begin{ex} We give a non-trivial example of a strongly formal manifold. Consider the full flag manifold $SU(3)/\left( U(1) \times U(1)\right)$. 

\begin{table}[h]
\begin{tabular}{c|c|c|c|c|c|c|c|c|}
 & $h_1$ & $h_2$ & $h_3$ & $h_4$ & $h_5$ & $h_6$ & $h_7$ & $h_8$ \\ \hline

$h_1$  &  & 0 & $-2h_4$ & $2h_3$ & $-h_6$ & $h_5$ & $h_8$ & $-h_7$ \\ \hline
$h_2$  &  &  & $h_4$ & $-h_3$ & $-h_6$ & $h_5$ & $-2h_8$ & $2h_7$ \\ \hline
$h_3$  &  &  &  & $-2h_1$ & $-h_8$ & $h_7$ & $-h_6$ & $h_5$ \\ \hline
$h_4$   &  &  &  &  & $-h_7$ & $-h_8$ & $h_5$ & $h_6$ \\ \hline
$h_5$   &  &  &  &  &  & $-2h_1-2h_2$ & $-h_4$ & $-h_3$ \\ \hline
$h_6$ &   &  &  &  &  &  & $h_3$ & $-h_4$ \\ \hline
$h_7$ &  &  &  &  &  &  &  & $-2h_2$ \\ \hline
$h_8$ &   &  &  &  &  &  &  &  \\ \hline
\end{tabular}
\vspace{1em}
\caption{Lie brackets for $\su(3)$}
\end{table}
The structure coefficients for the Lie algebra $\su(3)$ are given in Table 1, in the basis $\{h_i\}$ explicitly described in \cite[Section 7]{CZ}. We can take the span of $h_1, h_2$ to be the subalgebra corresponding to $\mathfrak{u}(1) \oplus \mathfrak{u}(1)$. The relative complex $C(\su(3), \mathfrak{u}(1) \oplus \mathfrak{u}(1))$ (i.e. the complex of cochains orthogonal to $\mathfrak{u}(1) \oplus \mathfrak{u}(1)$ in the terminology of \cite[Section 22]{CE}) is spanned in positive degrees by \\
\begin{itemize} \item[] $ \alpha_2:= h^3h^4, \alpha_2':= h^5h^6, \alpha_2'':= h^7h^8$ in degree 2, \\ \item[] $\alpha_3:= h^3h^5h^7 + h^3h^6h^8 - h^4h^5h^8 + h^4h^6h^7, \alpha_3':= h^3h^5h^8 - h^3h^6h^7 + h^4h^5h^7 + h^4h^6h^8$ in degree 3, \\ \item[] $\alpha_2\alpha_2' = h^3h^4h^5h^6, \alpha_2\alpha_2'' = h^3h^4h^7h^8, \alpha_2'\alpha_2'' = h^5h^6h^7h^8$ in degree 4, \\ \item[] $\alpha_2\alpha_2'\alpha_2'' = h^3h^4h^5h^6h^7h^8$ in degree 6. \\ \end{itemize} The above was obtained using \cite{Maple}. The almost complex structure determined by $Jh_1 = -h_2, Jh_3 = -h_4, Jh_5 = -h_6, Jh_7 = -h_8$ induces a left-invariant $J$ on $SU(3)/(U(1)\times U(1))$ which is integrable, as follows from a direct computation of the Nijenhuis tensor. Taking the basis $\phi_1 = h^1 - ih^2, \phi_2 = h^3 - ih^4, \phi_3 = h^5 - ih^6, \phi_4 = h^7 - ih^8$ for $\Lambda^{1,0}(\su(3)^* \otimes \CC)$, we have \begin{align*} \alpha_2 &= -\tfrac{i}{2} \phi^2 \overline{\phi^2}, \,\, \alpha_2' = -\tfrac{i}{2} \phi^3 \overline{\phi^3}, \,\, \alpha_2'' = -\tfrac{i}{2} \phi^4 \overline{\phi^4}, \\ \alpha_3 &= -\tfrac{1}{2} \left( \phi^2 \phi^4 \overline{\phi^3} + \phi^3 \overline{\phi^2} \overline{\phi^4} \right), \,\, \alpha_3' = \tfrac{i}{2} \left( \phi^3 \overline{\phi^2} \overline{\phi^4} - \phi^2 \phi^4 \overline{\phi^3} \right) . \end{align*}

Note that $\beta = -\alpha_3 + i\alpha_3'$ is of bidegree $(2,1)$, and $\{\beta, \overline{\beta}\}$ spans the degree 3 part of the relative complex. We calculate $d\alpha_2' = -d\alpha_2 = -d\alpha_2'' = \tfrac{1}{2} \left( \beta + \overline{\beta} \right)$, $d\alpha_3 = 0$, and $d\alpha_3' = 4(\alpha_2\alpha_2'' - \alpha_2\alpha_2' - \alpha_2'\alpha_2'')$. Setting $\gamma = 4i(\alpha_2\alpha_2'' - \alpha_2\alpha_2' - \alpha_2'\alpha_2'')$, we thus have $d\beta = \overline{\partial}\beta = \gamma$, and the relative complex is given by the following double complex:


\begin{center}
			\begin{figure}[h!]\label{su3u1u1}
				\begin{tikzcd}[column sep = tiny, row sep = small]
					&                              &                                          &  &                                       &                                       & \bullet_{\alpha_6} \\
					&                              &                                          &  & \bullet_{(\alpha_2+\alpha_2')^2} &                                       &                    \\
					&                              & \bullet_{\overline{\beta}} \arrow[rr]    &  & \bullet_{\gamma}                      & \bullet_{(\alpha_2-\alpha_2'')^2} &                    \\
					&                              &                                          &  &                                       &                                       &                    \\
					& \bullet_{\alpha_2+\alpha_2'} & \bullet_{2\alpha_2} \arrow[uu] \arrow[rr] &  & \bullet_{\beta} \arrow[uu]            &                                       &                    \\
					&                              & \bullet_{\alpha_2 - \alpha_2''}          &  &                                       &                                       &                    \\
					\bullet_1 &                              &                                          &  &                                       &                                       &                   
				\end{tikzcd}
				\caption{The left-invariant double complex for $SU(3)/\left( U(1) \times U(1) \right)$}
			\end{figure}
		\end{center}

 
		

		We can model this complex by $$\left( \Lambda(x, y, z, w ), \deldelbar z = xy - x^2 - y^2, \deldelbar w = x^3 \right),$$ where $x,y,z$ are in degree $(1,1)$ and $w$ is in $(2,2)$, by mapping $$x \mapsto \alpha_2 + \alpha_2', \ y \mapsto \alpha_2 - \alpha_2'', \ z \mapsto \alpha_2, \ w \mapsto 0.$$ This map is a weak equivalence. On the other hand, the inclusion of the full complex above into the complex of all forms on the flag manifold is also a weak equivalence. Indeed, $(\alpha_2 + \alpha_2')+(\alpha_2 - \alpha_2'')$ yields a left-invariant K\"ahler form; by a classical result of Chevalley--Eilenberg \cite[Theorem 22.1]{CE}, the inclusion of the complex into all forms induces an isomorphism on de Rham cohomology, so by the degeneracy of the Fr\"olicher spectral sequence our claim follows.
		
		Hence our manifold is weakly formal. Since it also satisfies the $\deldelbar$-lemma, it is equivalently strongly formal. Notice that it is not ABC--geometrically formal with respect to any $SU(3)$-invariant metric, as otherwise the products of its degree two part would span a three--dimensional space.

	\end{ex}
	
		\begin{ex}
	 Sferruzza and Tomassini have constructed an example of a compact complex threefold which satisfies the $\deldelbar$-lemma (and hence has no non-trivial Dolbeault--Massey products), yet carries a non-vanishing triple ABC--Massey product \cite{ST22}, and hence is not weakly formal.  For the convenience of the reader, we give a simple algebraic version of their example: Consider the cbba of left-invariant forms on the Iwasawa manifold:
	\[
	\left(\Lambda(\varphi^1,\varphi^2,\varphi^3,\bar\varphi^1,\bar\varphi^2,\bar\varphi^3),  d\varphi^3=-\varphi^1\varphi^3,d\bar\varphi^3-\bar\varphi^1\bar\varphi^2\right)
	\]
	Now define $A$ to be the sub-cbba generated by $\varphi^1\bar\varphi^1$, $\varphi^2\bar\varphi^2$, $\varphi^3\bar\varphi^3$, i.e. as a bigraded vector space:
	\begin{align*}
	    A^{0,0}&=\C,\\
	    A^{1,1}&=\spn ( \varphi^1\bar\varphi^1,\,\varphi^2\bar\varphi^2,\,\varphi^3\bar\varphi^3)\\
	    A^{2,2}&=\spn( \varphi^1\varphi^2\bar\varphi^1\bar\varphi^2,\,\varphi^2\varphi^3\bar\varphi^2\bar\varphi^3,\,\varphi^1\varphi^3\bar\varphi^1\bar\varphi^3)\\
	    A^{2,1}&=\spn( \varphi^1\varphi^2\bar\varphi^3)\quad A^{1,2}=\spn(\varphi^3\bar\varphi^1\bar\varphi^2)\\
	    A^{3,3}&=\spn(\varphi^1\varphi^2\varphi^3\bar\varphi^1\bar\varphi^2\bar\varphi^3)
	\end{align*}
	with all other $A^{p,q}=0$ and only nontrivial differential $\del\delbar\varphi^3\bar\varphi^3=\varphi^1\varphi^2\bar\varphi^1\bar\varphi^2$. Then $A$ satisfies the $\del\delbar$-lemma, but $\langle \varphi^1\bar\varphi^1,\varphi^1\bar\varphi^1,\varphi^2\bar\varphi^2\rangle\neq 0$.
	\end{ex}
	
	We collect the relations between the various notions of formality in Figure 4. \\
	
		\begin{figure}$$
		{\tiny
			\begin{tikzcd}[row sep = large]
				& \textrm{ABC-geometric formality} \arrow[ld] \arrow[rd, "{\small \textrm{KT}}" description, dashed]                                                                            &                                       \\
				\textrm{weak formality} \arrow[rd, "{{\small \textrm{Hopf, KT}}}"', dashed, shift right] \arrow[dd] \arrow[rr, "{\small \textrm{KT}}" description, dashed, bend left] & \textrm{strong formality} \arrow[d] \arrow[l] \arrow[r]                                                                                                                       & \textrm{de Rham formality} \arrow[dd] \\
				& \deldelbar \textrm{--lemma} \arrow[ld, "{\small \textrm{ST}}" description, dashed] \arrow[d] \arrow[rd] \arrow[ru] \arrow[lu, "{\small \textrm{ST}}"', dashed, shift right=3] &                                       \\
				\parbox{3cm}{vanishing ABC--Massey products crossing the $\del\delbar$-bottleneck} \arrow[rr, "{\small \textrm{Stein}}" description, dashed, bend right, shift right = 1.5] \arrow[r, "{\small \textrm{KT}}", dashed]                                                                                                                                & \textrm{vanishing Dolbeault--Massey products} \arrow[r, "{\small \textrm{Stein}}" description, dashed, bend right, shift left = 2] \arrow[l, "{\small \textrm{ST}}" description, dashed, bend left, shift right = 2]                                                                                                                                & \textrm{vanishing Massey products} \arrow[ll, "{\small \textrm{ST}}" description, dashed, bend left, shift left = 5.5]
		\end{tikzcd} }$$
		\caption{Relations between various discussed notions. A dotted arrow means that the source property does not imply the target property, with a counterexample labelling the arrow. KT denotes the Kodaira--Thurston surface, ST the manifold constructed by Sferruzza--Tomassini, Hopf denotes the Hopf surface, and Stein denotes any Stein manifold with a non-vanishing Massey product \cite[p.192]{NT78}.}
	\end{figure}
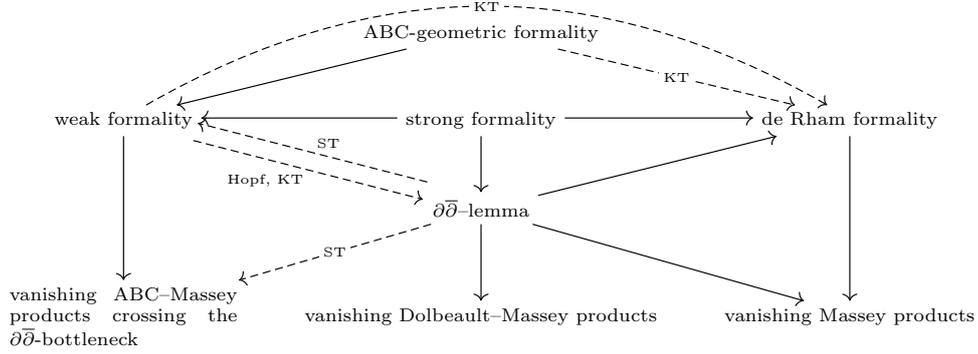
	
	The existence of the Sferruzza--Tomassini manifold leads us to the following question:
	\begin{question} Are compact K\"ahler manifolds strongly formal? \end{question}
	
	Either compact K\"ahler manifolds are strongly formal, in which case ABC--Massey products can tell the difference between K\"ahler manifolds and non--K\"ahler manifolds satisfying the $\deldelbar$-lemma, or we have potential invariants for K\"ahler manifolds.
	
	Since all K\"ahler nilmanifolds are biholomorphic to tori \cite[Theorem 2]{BC06}, one could consider looking at K\"ahler solvmanifolds for a negative answer to the above question. A complicating factor is that all such manifolds are finitely covered by tori \cite[Corollary 1]{Ar04}, and hence their non-formality could not be detected by a non-vanishing ABC--Massey triple product, see \Cref{masseyproductsandblowupssection}. 
	
	On the positive side, we have the following:
	
	\begin{prop}
	Hermitian symmetric spaces are strongly formal (even ABC-geometrically formal).
	\end{prop}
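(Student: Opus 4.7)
The plan is to exhibit the subalgebra $A_M^G$ of $G$-invariant forms on a compact Hermitian symmetric space $M = G/K$ as a cbba that is weakly equivalent to $A_M$ and on which both differentials $\partial$ and $\bar\partial$ vanish. This will simultaneously establish strong formality and ABC-geometric formality.

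First, I would fix the canonical $G$-invariant K\"ahler metric on $M$, so that the compact group $G$ acts by holomorphic isometries. By \Cref{realstructure}, the inclusion $A_M^G \hookrightarrow A_M$ is compatible with the natural real structures, hence is an $E_1$-isomorphism as soon as it induces an isomorphism in Dolbeault cohomology. This latter assertion follows from a standard averaging argument: integration over $G$ produces a chain projection $A_M \to A_M^G$ which is a left inverse to the inclusion, and since the average of any representative $\omega$ is cohomologous to $\omega$, the inclusion is also surjective on cohomology.

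The key step is to verify that $d$ vanishes identically on $A_M^G$. Let $\mathfrak{g} = \mathfrak{k} \oplus \mathfrak{m}$ be the Cartan decomposition of the symmetric pair $(G,K)$; the defining property of a symmetric space is $[\mathfrak{m},\mathfrak{m}] \subseteq \mathfrak{k}$. Under the canonical identification of $A_M^G$ with the $\mathrm{Ad}(K)$-invariant subspace of $\Lambda^\ast \mathfrak{m}^\ast$, the induced differential is given by the Chevalley--Eilenberg formula, which on $\Lambda^\ast \mathfrak{m}^\ast$ depends only on the $\mathfrak{m}$-component of the bracket; this component vanishes on $\mathfrak{m} \times \mathfrak{m}$ by the symmetric-pair condition. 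Hence $d = 0$ on $A_M^G$, and since $\partial$ and $\bar\partial$ are the components of $d$ of distinct bidegrees, both vanish separately.

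Strong formality then follows immediately, as $A_M^G \hookrightarrow A_M$ is a single weak equivalence whose source satisfies $\partial = \bar\partial = 0$. For ABC-geometric formality, I would verify the second condition of \Cref{ABCgeometricallyformal} for the same subalgebra $\cH := A_M^G$: it is a bigraded bidifferential subalgebra closed under multiplication, satisfies $\deldelbar = 0$ trivially, and is closed under the Hodge $\ast$ operator because that operator is $G$-equivariant for the $G$-invariant metric. The weak equivalence was already established in the first step. The only nontrivial ingredient is the $E_1$-isomorphism claim for the inclusion of invariant forms, which is the main (but mild) obstacle, handled cleanly by averaging together with \Cref{realstructure}; the remaining content of the proof reduces to classical Lie-theoretic facts about symmetric pairs.
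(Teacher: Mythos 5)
Your proposal is correct and follows essentially the same route as the paper's (one-sentence) proof, which simply invokes the classical facts that the cohomology of a Hermitian symmetric space is computed by the sub-cbba of invariant forms and that $d\equiv 0$ on those forms; you have merely supplied the standard details (the symmetric-pair condition $[\mathfrak{m},\mathfrak{m}]\subseteq\mathfrak{k}$, and invariance of $\ast$ for the ABC-geometric statement). One small caveat: the averaging argument as you state it directly yields the isomorphism on de Rham cohomology, but for Dolbeault cohomology the path-integration chain homotopy only provides a $\delbar$-homotopy up to a bidegree-$(-1,0)$ correction term, so to get the weak equivalence you should additionally use the K\"ahler structure --- e.g. identify the invariant forms with the harmonic forms of the invariant metric (all Laplacians agreeing up to a factor), or combine the de Rham isomorphism with degeneration of the Fr\"olicher spectral sequence, exactly as the paper does in its flag manifold example.
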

	
	\begin{proof} On Hermitian symmetric spaces, the cohomology can be computed from the sub-cbba of invariant forms, but it is known that $d\equiv 0$ on such forms. (This is the same proof as for usual (geometric) formality of Riemannian symmetric spaces, see e.g. \cite{Ko01}.)
	\end{proof}

There is also the following result in this direction:

\begin{thm}\cite[Theorem 3.5]{Ste23}
A compact K\"ahler manifold of complex dimension $n \geq 2$ with the Hodge diamond of a complete intersection is strongly formal.
\end{thm}

\subsection*{Formality and products}

We now consider how our bigraded notions of formality interact with products of complex manifolds (and more generally tensor products of cbba's). 

\begin{prop}
The product of strongly formal manifolds is strongly formal. The product of a strongly formal manifold with a weakly formal one is weakly formal.
\end{prop}

\begin{proof}
Given weak equivalences $\varphi:A\to C$ and $\psi: B\to D$, $\varphi\otimes\psi$ is a weak equivalence \cite{Ste23}. Iterating this, if $A$ and $B$ are connected by chains of quasi-isomorphisms to cbba's $H,K$, then also $A\otimes B$ is connected to $H\otimes K$. Now, if $\del_H\equiv 0\equiv\delbar_H$ then for any elements $h\in H$, $k\in K$, we have $\del(h\otimes k)=h\otimes\del_Kk$ and $\del(h\otimes k)=h\otimes\del_Kk$, hence the result follows.
\end{proof}

We show by example that the product of weakly formal manifolds need not be weakly formal:
\begin{ex} The product $KT \times KT$ of the Kodaira--Thurston surface with itself carries a non-trivial ABC--Massey triple product, though $KT$ is weakly formal by \Cref{KT}.

A bigraded model for $KT \times KT$ is given by $$\left( \Lambda(x, \bar{x}, y, \bar{y}, u, \bar{u}, v, \bar{v}), dy = x\bar{x}, dv = u\bar{u} \right),$$ where $x,y,u,v$ are in bidegree $(1,0)$. It is then a routine check that the ABC--Massey triple product $\langle x\bar{u}, u\bar{x}, \bar{x} \rangle$, represented by the Aeppli class $[y\bar{v}\bar{x}]$, is non-trivial. \end{ex}

\section{Massey products under blow-ups}\label{masseyproductsandblowupssection}

In this section we touch upon the interaction between blow-ups (and more generally non-zero degree holomorphic maps), and our notions of bigraded formality and Massey products. The following theorem allows one to construct new non-formal manifolds from existing ones:
	
	\begin{thm}
		Let $Z\subseteq X$ a complex submanifold of complex codimension $k\geq 2$ and $\pi:\tilde{X}\to X$ the blow-up of $X$ along $Z$. If $m$ is a non-trivial ABC--Massey product (either in the ad hoc or the spectral sequence sense) on $X$, living in total degree $<2k$, then also $\pi^*m$ is non-trivial.
	\end{thm}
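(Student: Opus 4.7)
The plan is to mimic Babenko--Taimanov's strategy for symplectic blow-ups, with their Mayer--Vietoris / de Rham input replaced by a blow-up formula at the level of double complexes. Concretely, for a complex blow-up $\pi:\tilde X\to X$ along a codimension-$k$ submanifold $Z$, there is an $E_1$-isomorphism
\[
A_{\tilde X}\simeq A_X\oplus C,
\]
where $C$ is a double complex whose cohomology splits as a sum of copies of $H(A_Z)$ shifted by bidegrees $(i,i)$ for $1\le i\le k-1$. The cohomological version for $H_{BC}$ and $H_A$ is due to Rao--Yang--Yang, Stelzig, and Meng--Yang, and by \Cref{prop: E1-isos}(2) it transports to any cohomological functor, in particular to all the Schweitzer cohomologies $H^{*}_{S_{p,q}}$ relevant for the higher ABC--Massey products. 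I would use this as a black box.

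First I would pull back a defining system for $m$ on $X$ via $\pi^*$; since $\pi^*$ is a map of cbba's, the result is a defining system for $\pi^*m$ on $\tilde X$, and the representative it produces in the ambient Schweitzer cohomology of $\tilde X$ automatically lies in the subspace $\pi^*H^{*}_{S_{p,q}}(X)$. Second, I would use the blow-up decomposition to write $H^{*}_{S_{p,q}}(\tilde X)=\pi^*H^{*}_{S_{p,q}}(X)\oplus N$ and combine it with the projection formula $\pi_*(\pi^*a\cdot n)=a\cdot \pi_* n$ to see that multiplication by a pull-back class sends $N=\ker\pi_*$ to itself. Consequently, the indeterminacy $I(\tilde X)$ of $\pi^*m$ splits as $\pi^*I(X)\oplus I_{\mathrm{new}}$ with $I_{\mathrm{new}}\subseteq N$. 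Were $\pi^*m$ zero, projecting the equation ``pull-back representative $=$ element of $I(\tilde X)$'' onto the $\pi^*$-summand would force $m\in I(X)$, contradicting the non-triviality hypothesis on $X$. For the spectral sequence version of the Massey product, the same splitting gives a direct sum decomposition of the Eilenberg--Moore spectral sequence of $\tilde X$ into the Eilenberg--Moore spectral sequence of $X$ and a ``new'' summand; the non-triviality of $d_{n-1}$ is then preserved using the functoriality of \Cref{rem: functoriality spectral sequence MP}.

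The degree bound $<2k$ enters precisely in the multiplicative analysis: the lowest-degree generator of $N$ is the class of the exceptional divisor, which has total degree $2$, and the only relation mixing $N$-classes with pull-back classes is Grothendieck's relation $[E]^k=\sum_{i<k}\pi^*c_{k-i}(N_{Z/X})\cdot[E]^i$ coming from the tautological class on $\mathbb{P}(N_{Z/X})$, which first contributes in total degree $2k$. Below that degree the decomposition is ``clean'' and the projection to the pull-back summand is unambiguous. The main obstacle I anticipate is the bookkeeping for the higher defining systems: for the ad hoc quadruple (and longer) products, the primitives $\eta,\eta',\xi,\xi',\ldots$ appearing in $S^{-1}_{p,q}$ (and deeper Schweitzer entries) on $\tilde X$ need not be pull-backs, and one has to argue that their $N$-components contribute only to $I_{\mathrm{new}}$ in the quotient; this is exactly where the degree bound is used to exclude spurious cancellations arising from products of exceptional classes crossing through Grothendieck's relation.
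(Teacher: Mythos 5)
Your overall strategy---pull back a defining system, split the cohomology of the blow-up as $\pi^*H(X)\oplus N$ with $N=\ker\pi_*$ an ideal below degree $2k$, and project the vanishing equation onto the $\pi^*$-summand---works for \emph{triple} products, where the value set of the Massey product really is a coset of the linear subspace $aH_A+cH_A$ (this is essentially Taylor's argument; cf.\ \Cref{pullbacktripleABC--Massey}). But for quadruple and higher products there is a genuine gap at the step ``the indeterminacy $I(\tilde X)$ of $\pi^*m$ splits as $\pi^*I(X)\oplus I_{\mathrm{new}}$'': these higher products are subsets of cohomology that are \emph{not} cosets of a linear subspace, so there is no equation ``pull-back representative $=$ element of $I(\tilde X)$'' to project. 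Vanishing of $\pi^*m$ means that \emph{some} defining system on $\tilde X$---whose primitives $x,y,z,\tilde\eta,\tilde\xi,\dots$ are arbitrary forms on $\tilde X$, not pull-backs, and whose later members are constrained nonlinearly by the earlier choices---produces $0$. A splitting of $H_{BC}$, $H_A$ and the Schweitzer cohomologies, together with the projection formula, does not by itself let you turn such a defining system into one on $X$: for that you need a \emph{cochain-level, multiplicative} retraction onto (a model of) $A_X$, which the cohomological blow-up formula does not provide. The same issue affects your spectral-sequence argument: the Eilenberg--Moore spectral sequence is built from tensor powers of the augmentation ideal of the cbba, so a direct sum decomposition of it requires a splitting of (augmented) cbba's, not of cohomology groups.

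This missing ingredient is the actual content of the paper's proof: one constructs a partial relative model $\varphi:\cM=A_X\otimes\Lambda V\to A_{\tilde X}$ in which the ideal generated by $V$ is a differential ideal (so $\cM$ retracts onto $A_X$ as a cbba), $\varphi$ restricts to $\pi^*$ on $A_X$, and $H_{BC}(\varphi)$, $H_A(\varphi)$ are isomorphisms in total degree $<2k$. Every defining system in degrees $<2k$ can then be realized in $\cM$, and applying the retraction $\cM\to A_X$ to it yields a defining system on $X$ computing $m$; functoriality (\Cref{prop: MPs E1-iso stable}, \Cref{remarkafterprop} and \Cref{rem: functoriality spectral sequence MP}) gives the contradiction, and the retraction likewise splits the Eilenberg--Moore spectral sequences. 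Your degree analysis (the $H_{BC}(Z)$-module structure of $N$ and the Grothendieck relation first interfering in degree $2k$) is correct and is exactly what makes the construction of $\cM$ possible in degrees $<2k$; but it has to enter through the construction of such a model, not through a projection of value sets.
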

	
	\begin{proof}
		Recall \cite{Ste21} that additively, there is the following formula for the cohomology of $\tilde{X}$:
		\begin{equation}\label{eqn: blow-up formula}
			H_{BC}(X)\oplus \bigoplus_{i=1}^{k-1}H_{BC}(Z)[i]\cong H_{BC}(\tilde{X}).
		\end{equation}
		Here $[i]$ denotes an up-right shift in bidegree by $(i,i)$. The induced ring structure up to degree $2k$ on the left hand side is simple to describe: Both summands have a natural $H_{BC}(Z)$-module structure (via the identity, resp. restriction) and elements in $H_{BC}(Z)$ multiply according to the multiplication in $H_{BC}(Z)$ with appropriate degree shift. In particular, if we truncate before degree $2k$, the right summand (which corresponds to $\ker\pi_*$ under the isomorphism), is an ideal. The analogous formulae hold for $H_A(\tilde{X})$ with its structure as an $H_{BC}(\tilde{X})$-module.

		The main idea is to construct a partial (bigraded) relative model for the blow-up, which retracts to $A_X$. More precisely, we will construct a map $\varphi:\cM\to A_{\tilde{X}}$ with the following properties:
		\begin{enumerate}
			\item As algebras, $\cM=A_X\otimes \Lambda V$ for some bigraded vector space $V$.
			\item The ideal $I(V)$ generated by $V$ is a differential ideal (i.e. $\cM=A_X\oplus I(V)$ as complexes).
			\item On $A_X$, $\varphi=\pi^*$.
			\item The map 
			\[
			H(\varphi):H(\cM)\to H(\tilde X)
			\]
			is an isomorphism in total degrees $<2k$, for $H=H_{BC}$ and $H=H_A$.
		\end{enumerate}
		
		With this setup, we obtain a commutative diagram of cbba's:
		\[
		\begin{tikzcd}
			A_X\ar[r]\ar[rd,"\pi^*"]&\cM\ar[r]\ar[d]&A_X\\
			&A_{\tilde{X}}&
		\end{tikzcd}
		\]
		in which the composition of the two maps (inclusion and projection) in the first row is the identity and the vertical map induces an isomorphism in cohomology up to degree $k$. This implies the claim by functoriality of Massey products. Note that if we had chosen basepoints of $X$ and $\tilde{X}$, the above is compatible with the induced augmentations.\\
		
		Let us indicate how to construct such a model. Since this is essentially a special case of the general construction of (bigraded, relative) models that is carried out in \cite{Ste23}, we allow ourselves to be brief. First, pick a bigraded basis $\{\mathfrak{b}_i\}$ for $H_{BC}(Z)$ and a collection of linearly independent classes $\{\mathfrak{a}_j\}$ that span a complement to the image of the natural map $H_{BC}(Z)\to H_A(Z)$. Now pick representatives $b_i,a_j$ for the classes $j_*\pi^*\mathfrak{b}_i$ and $j_*\pi^*\mathfrak{a_i}$, where $j:E\to \tilde{X}$ is the inclusion of the exceptional divisor into $\tilde{X}$. We may choose the unique class in $H_{BC}^{0,0}(Z)$ to be $[1]$, which maps to $[\theta]$. Let us denote by $\theta$ a representative for that class. Then define $V'$ to be the vector space generated by the symbols (of pure bidegree) $x_i,y_j,\del y_j,\delbar y_j$ and set $\cM':=A_X\otimes \Lambda V'$ and define a differential to be as indicated by the symbols on the $y_j$ and zero on all other generators (i.e. $dx_i=d\del y_j=d\delbar y_j=0$). Then there is a well-defined map $\varphi':\cM'\to A_{\tilde{X}}$ by sending $x_i\mapsto b_i, y_i\mapsto a_j$. By \ref{eqn: blow-up formula}, the induced maps $H(\varphi')$ will be surjective and the pair $(\cM',\varphi')$ satisfies all conditions except maybe $4.$ Therefore, consider 
		\[
		C=\ker H_{BC}(\varphi'):H_{BC}^{<2k}(\cM')\to H_{BC}^{<2k}(\tilde{X})
		\]
		Under the identification $H_{BC}(\cM')=H_{BC}(X)\oplus H_{BC}(I(V'))$ and the formula $\ref{eqn: blow-up formula}$, we see that, since we are in degrees less than $2k$, $C=\{0\}\oplus C'$. (In fact, in degree $2k$, the class $\theta^k$ will create a relation between both summands). Therefore, in order to kill these relations, we may pick (generators $c_i$ for) classes in $\ker\pi_*$, and then define $V''$ to be the vector space given by the symbols $z_i,\del z_i,\delbar z_i$. Then set $\cM'':= \cM'\otimes\Lambda V''$, with differentials as indicated by the symbols and $\deldelbar z_i= c_i$. We may then extend $\varphi'$ to $\varphi'':\cM''\to A_{\tilde X}$ by sending $z_i$ to some $\deldelbar$-primitive for $\varphi'(c_i)$. The map $H(\cM')\to H(\cM'')$ has the property that all elements in $C$ map to $0$ and all elements in $\ker H_A(\varphi')$ map to elements in the image of $H_{BC}(\cM'')\to H_A(\cM'')$. Repeating this process if necessary, we obtain our desired pair $(V,\varphi)$ and hence $\cM$.
	\end{proof}
	
	\begin{ex}
	In \Cref{quadruple} we considered a complex fourfold with a non-trivial quadruple ABC--Massey product, in total degree 3. Applying the above theorem, we see that any blow-up of this fourfold will also carry a non-trivial quadruple ABC--Massey product, yielding non-nilmanifold examples of this phenomenon. \end{ex}
	
	\begin{rem}
		Essentially the same proof (with some obvious simplifications) shows the statement for ordinary Massey products in the de Rham cohomology of any blow-up of a submanifold with almost complex normal bundle (cf. \cite{M84}), in particular for symplectic blow-ups. This had previously been obtained by Babenko--Taimanov  \cite{BT00}.
	\end{rem}

	 In \cite{Ta10}, Taylor shows that a non-trivial triple Massey product pulls back non-trivially under a non-zero degree map of rational Poincar\'e duality spaces (in particular, closed orientable manifolds). An immediate adaptation of his argument to compact complex manifolds, where Aeppli and Bott--Chern cohomology are paired non-degenerately under Serre duality \cite{S07}, gives the following: 
	 
	 \begin{prop} Let $Y \xrightarrow{f} X$ be a non-zero degree holomorphic map of compact complex manifolds (e.g. any blow-down $\tilde{X} \to X$). A non-vanishing ABC--Massey triple product on $X$ pulls back to a non-vanishing ABC--Massey triple product on $Y$. \end{prop}
	 
	 \begin{proof} The argument is a direct translation of that in \cite{Ta10}; we indicate only which adaptations need to be made. In general, for Bott--Chern classes $x_1, x_3$ of bidegree $(p_1,q_1), (p_3, q_3)$ respectively, denote by $\mathcal{J}_{\{x_1\}, \{x_3\}}$ the subset of $H_A(X)$ given by $x_1H_A + H_Ax_3$, and denote by $\mathcal{A}_{\{x_1\}, \{x_3\}}$ the set of all $y \in H_{BC}$ such that $x_1y = x_3y = 0$. Then the ABC--Massey triple product determines a map $$\langle x_1, - , x_3 \rangle: \mathcal{A}_{\{x_1\}, \{x_3\}} \rightarrow H_A/\mathcal{J}_{\{x_1\}, \{x_3\}}.$$ 
	 
	 If $x_0, x_2$ are in $\mathcal{A}_{\{x_1\}, \{x_3\}}$, then $x_0 \langle x_1, x_2, x_3 \rangle$ is a single Aeppli cohomology class, since $x_0$ kills the indeterminacy, cf. \cite[Theorem 2.1]{Ta10}. In particular, if $x_0 \langle x_1, x_2, x_3 \rangle \neq 0$, then the ABC--Massey triple product $\langle x_1, x_2, x_3 \rangle$ is non-trivial.
	 
	 The non-degenerate pairing $H_A \otimes H_{BC} \to \CC$ induces a non-degenerate pairing $$\left( H_A/\mathcal{J}_{\{x_1\}, \{x_3\}} \right) \otimes \mathcal{A}_{\{x_1\}, \{x_3\}} \to \CC,$$ cf. \cite[Proposition 5.1]{Ta10}. 
	 
	 Now suppose $x_1, x_2, x_3$ are Bott--Chern classes on $X$ such that $\langle x_1, x_2, x_3 \rangle$ is non-trivial. Then $\langle x_1, x_2, x_3 \rangle$ is a non-trivial element in $H_A/\mathcal{J}_{\{x_1\}, \{x_3\}}$, and hence by the above there is a Bott--Chern class $t \in \mathcal{A}_{\{x_1\}, \{x_3\}}$ such that $t \langle x_1, x_2, x_3 \rangle$ (which is a single cohomology class) is the volume class of $X$, cf. \cite[Theorem 5.2]{Ta10}. Therefore the class $f^*t \in \mathcal{A}_{\{f^*x_1\}, \{f^*x_3\}}$ on $Y$ satisfies $\int_Y f^*t \langle f^*x_1, f^*x_2, f^*x_3 \rangle \neq 0$, implying that $\langle f^*x_1, f^*x_2, f^*x_3 \rangle$ is non-trivial on $Y$. \end{proof}

\subsection*{Conflict of interest statement.} On behalf of all authors, the corresponding author states that there is no conflict of interest.

\subsection*{Data availability statement.} Data sharing not applicable to this article as no datasets were generated or analysed during the current study.

\end{document}